\documentclass
[
    fontsize = 11 pt,
    american,
    captions = tableheading,
    numbers = noenddot,
    abstracton,
    paper = letter,
]
{scrartcl}

%


\usepackage[T1]{fontenc}
\usepackage[utf8]{inputenc}
\usepackage
[
    babel = true, 
]
{microtype}           
\usepackage{csquotes} 


\usepackage[dvipsnames]{xcolor} 

\definecolor{stroke1}{HTML}{2574A9} 


\usepackage{amsmath}
\usepackage{amssymb}
\usepackage{amsthm}
\usepackage{thmtools}
\usepackage{mathtools}
\usepackage{thm-restate}
\usepackage{dsfont}        
\usepackage{braceMnSymbol} 


\usepackage
[
    ttscale = 0.85, 
]
{libertine} 
\usepackage
[
    libertine,    
    slantedGreek, 
    vvarbb,       
    libaltvw,     
]
{newtxmath} 
\usepackage{url} 
\usepackage{bm}  


\usepackage{graphicx} 
\usepackage
[
    subrefformat = simple, 
    labelformat = simple,  
]
{subcaption}         


\usepackage{array}     
\usepackage{booktabs}  
\usepackage{longtable} 
\usepackage{pdflscape} 


\usepackage[shortlabels]{enumitem} 


\usepackage
[
    ruled,         
    vlined,        
    linesnumbered, 
]
{algorithm2e} 


\usepackage{xspace}   
\usepackage
[
    shortcuts, 
]
{extdash}             
\usepackage{setspace} 


\usepackage{xparse}    
\usepackage{footnote}  
\usepackage{afterpage} 
\usepackage
[
    textsize = scriptsize, 
]
{todonotes}            


\usepackage
[
    sortcites,              
    style = alphabetic,     
    defernumbers,           
    safeinputenc,           
    backref = true,         
    backrefstyle = three,   
    hyperref = true,        
    maxbibnames = 99,       
    maxcitenames = 3,       
    minalphanames = 3,       
]
{biblatex} 

\DefineBibliographyStrings{english}%
{%
    backrefpage  = {\lowercase{s}ee page}, 
    backrefpages = {\lowercase{s}ee pages} 
}

\addbibresource{references.bib}




\usepackage
[
    bookmarks = true,                 
    bookmarksopen = false,            
    bookmarksnumbered = true,         
    pdfstartpage = 1,                 
    colorlinks = true,                
    allcolors = stroke1,              
]
{hyperref} 

\usepackage
[
    noabbrev,   
    nameinlink, 
]
{cleveref} 
\usepackage{multirow}
\usepackage{caption}
\captionsetup
{
    font = small,
    labelfont = {bf, sf},
    format = plain,
    singlelinecheck = off,
}

\usepackage{lineno}

%


\date{}

\makeatletter
    \def\IfEmptyTF#1%
    {%
        \if\relax\detokenize{#1}\relax%
            \expandafter\@firstoftwo%
        \else%
            \expandafter\@secondoftwo%
        \fi%
    }
\makeatother

\NewDocumentCommand{\mathOrText}{m}
{%
    \ensuremath{#1}\xspace%
}

\let\originalleft\left
\let\originalright\right
\renewcommand{\left}{\mathopen{}\mathclose\bgroup\originalleft}
\renewcommand{\right}{\aftergroup\egroup\originalright}

\makeatletter
    \DeclareRobustCommand{\bfseries}%
    {%
        \not@math@alphabet\bfseries\mathbf%
        \fontseries\bfdefault\selectfont%
        \boldmath%
    }

\xspaceaddexceptions{]\}}

\urlstyle{rm}

\allowdisplaybreaks

\crefname{ineq}{inequality}{inequalities}
\creflabelformat{ineq}{#2{\upshape(#1)}#3}

\crefname{term}{term}{terms}
\creflabelformat{term}{#2{\upshape(#1)}#3}

\crefname{cond}{condition}{conditions}
\creflabelformat{cond}{#2{\upshape(#1)}#3}

\crefname{assume}{assumption}{assumptions}
\creflabelformat{assume}{#2{\upshape(#1)}#3}


\deffootnote[1.2 em]{1.2 em}{0 em}{\makebox[1.2 em][l]{\textbf{\thefootnotemark}}}

\let\oldfootnote\footnote

\newlength{\spaceBeforeFootnote} 
\newlength{\spaceAfterFootnote}  

\RenewDocumentCommand{\footnote}{o o o m}%
{%
    \IfNoValueTF{#1}%
    {%
        \oldfootnote{#4}%
    }%
    {%
        \setlength{\spaceBeforeFootnote}{\IfEmptyTF{#1}{0}{#1} em}%
        \IfNoValueTF{#2}%
        {%
            \hspace*{\spaceBeforeFootnote}\oldfootnote{#4}%
        }%
        {%
            \setlength{\spaceAfterFootnote}{\IfEmptyTF{#2}{0}{#2} em}%
            \hspace*{\spaceBeforeFootnote}\IfNoValueTF{#3}{\oldfootnote{#4}}{\oldfootnote[#3]{#4}}\hspace*{\spaceAfterFootnote}%
        }%
    }%
}

\makesavenoteenv{figure}
\makesavenoteenv{table}
\makesavenoteenv{tabular}


\declaretheoremstyle
[
   	spaceabove = \topsep,
   	spacebelow = \topsep,
   	headfont = \bfseries,
   	headformat = \textcolor{stroke1}{$\blacktriangleright$} \NAME~\NUMBER \NOTE,
   	notefont = \bfseries,
   	notebraces = {(}{)},
   	bodyfont = \normalfont,
   	postheadspace = 0.5 em,
   	qed = \textcolor{stroke1}{\bfseries$\blacktriangleleft$},
]
{myTheoremStyle}


\declaretheorem
[
   	style = myTheoremStyle,
   	name = Corollary,
    sharenumber = conjecture,
]
{corollary}
\declaretheorem
[
   	style = myTheoremStyle,
   	name = Theorem,
    sharenumber = conjecture,
]
{theorem}
\declaretheorem
[
   	style = myTheoremStyle,
   	name = Definition,
    sharenumber = conjecture,
]
{definition}


\NewDocumentCommand{\functionTemplate}{m m m m o}%
{%
    \IfNoValueTF{#5}%
    {%
        \mathOrText{#1\left#2{#4}\right#3}%
    }%
    {%
        \mathOrText{#1#5#2{#4}#5#3}%
    }%
}

\newcommand*{\leftBracketType}{(}
\newcommand*{\rightBracketType}{)}

\NewDocumentCommand{\createFunction}{m m o o}%
{%
    \renewcommand*{\leftBracketType}{\IfNoValueTF{#3}{(}{#3}}%
    \renewcommand*{\rightBracketType}{\IfNoValueTF{#4}{)}{#4}}%
    \NewDocumentCommand{#1}{o o}%
    {%
        \IfNoValueTF{##1}%
        {%
            \mathOrText{#2}%
        }%
        {%
            \functionTemplate{#2}{\leftBracketType}{\rightBracketType}{##1}[##2]%
        }%
    }%
}

\DeclareDocumentCommand{\probabilisticFunctionTemplate}{m m O{} o}
{%
    \functionTemplate{#1}%
    {\lbrack}%
    {\rbrack}%
    {#2\IfEmptyTF{#3}{}{\ \IfNoValueTF{#4}{\left}{#4}\vert\ \vphantom{#2}#3\IfNoValueTF{#4}{\right.}{}}}%
    [#4]%
}



\newcommand*{\N}{\mathOrText{\mathds{N}}}

\newcommand*{\R}{\mathOrText{\mathds{R}}}

\newcommand*{\indicatorFunctionSymbol}{\mathds{1}}


\RenewDocumentCommand{\Pr}{m O{} o}%
{%
    \probabilisticFunctionTemplate{\mathrm{Pr}}{#1}[#2][#3]%
}

\NewDocumentCommand{\E}{m O{} o}%
{%
    \probabilisticFunctionTemplate{\mathds{E}}{#1}[#2][#3]%
}

\NewDocumentCommand{\Var}{m O{} o}%
{%
    \probabilisticFunctionTemplate{\mathrm{Var}}{#1}[#2][#3]%
}


\DeclareDocumentCommand{\bigO}{m o}%
{%
    \functionTemplate{\mathrm{O}}{(}{)}{#1}[#2]%
}

\DeclareDocumentCommand{\smallO}{m o}%
{%
    \functionTemplate{\mathrm{o}}{(}{)}{#1}[#2]%
}

\DeclareDocumentCommand{\bigTheta}{m o}%
{%
    \functionTemplate{\upTheta}{(}{)}{#1}[#2]%
}

\DeclareDocumentCommand{\bigOmega}{m o}%
{%
    \functionTemplate{\upOmega}{(}{)}{#1}[#2]%
}

\DeclareDocumentCommand{\smallOmega}{m o}%
{%
    \functionTemplate{\upomega}{(}{)}{#1}[#2]%
}



\DeclareDocumentCommand{\eulerE}{o}%
{%
    \mathOrText{\mathrm{e}\IfNoValueTF{#1}{}{^{#1}}}%
}



\DeclareDocumentCommand{\poly}{m o}%
{%
    \functionTemplate{\mathrm{poly}}{(}{)}{#1}[#2]%
}

\createFunction{\id}{\mathrm{id}}

\NewDocumentCommand{\ind}{m o o}%
{%
    \IfNoValueTF{#2}%
    {%
        \mathOrText{\indicatorFunctionSymbol_{#1}}%
    }%
    {%
        \functionTemplate{\indicatorFunctionSymbol_{#1}}{(}{)}{#2}[#3]%
    }%
}

\DeclareDocumentCommand{\dom}{m o}%
{%
    \functionTemplate{\mathrm{dom}}{(}{)}{#1}[#2]%
}

\DeclareDocumentCommand{\rng}{m o}%
{%
    \functionTemplate{\mathrm{rng}}{(}{)}{#1}[#2]%
}

\DeclareDocumentCommand{\d}{o}%
{%
    \mathrm{d}\IfNoValueTF{#1}{}{^{#1}}%
}

\DeclareDocumentCommand{\set}{m m o}%
{%
    \mathOrText{\IfNoValueTF{#3}{\left}{#3}\{#1\ \IfNoValueTF{#3}{\left}{#3}\vert\ \vphantom{#1}#2\IfNoValueTF{#3}{\right.}{}\IfNoValueTF{#3}{\right}{#3}\}}%
}


\DeclareDocumentCommand{\randomProcess}{m o}
{%
    \mathOrText{X^{(#1)}\IfNoValueTF{#2}{}{_{#2}}}%
}

\DeclareDocumentCommand{\transformedProcess}{o}
{%
    \mathOrText{Y\IfNoValueTF{#1}{}{_{#1}}}%
}

\DeclareDocumentCommand{\filtration}{o}
{%
    \mathOrText{\mathcal{F}\IfNoValueTF{#1}{}{_{#1}}}%
}

\newcommand*{\cSIRS}{\mathOrText{\textrm{cSIRS}}}
\newcommand*{\placy}{\mathOrText{\textrm{labeled cSIRS}}}
\newcommand*{\timePoint}{\mathOrText{t}}

\newcommand*{\numberOfVertices}{\mathOrText{n}}

\newcommand*{\infectionRate}{\mathOrText{\lambda}}
\newcommand*{\infectionConstant}{\mathOrText{c}}

\newcommand*{\deimmunizationRate}{\mathOrText{\varrho}}
\newcommand*{\decayRate}{\mathOrText{\alpha}}
\newcommand*{\contactProcess}{\mathOrText{C}}
\newcommand*{\contactProcessProjection}{\mathOrText{C'}}
\newcommand*{\infectedDiscrete}[1]{\mathOrText{I_{\timeContinuous{#1}}}}

\newcommand*{\recoveredDiscrete}[1]{\mathOrText{R_{\timeContinuous{#1}}}}

\newcommand*{\timeDiscrete}{\mathOrText{t}}
\newcommand*{\timeContinuous}[1]{\mathOrText{\tau_{#1}}}

\newcommand*{\degree}{\mathOrText{d}}
\newcommand*{\degreeGap}{\mathOrText{\varepsilon_d}}
\newcommand*{\spectralGap}{\mathOrText{\delta}}

\DeclareDocumentCommand{\lyapunovHelper}{o}
{%
    \mathOrText{f\IfNoValueTF{#1}{}{(#1)}}%
}

\DeclareDocumentCommand{\lyapunovFunction}{m o}
{%
    \mathOrText{F\IfNoValueTF{#2}{}{(#2)}}%
}

\DeclareDocumentCommand{\potentialFunctionIMinusR}{o}
{%
    \mathOrText{H\IfNoValueTF{#1}{}{(#1)}}%
}
\newcommand*{\potentialIMinusR}[1]{\mathOrText{H_{#1}}}

\newcommand*{\infectedContinuous}[1]{\mathOrText{I_{#1}}}
\newcommand*{\susceptibleContinuous}[1]{\mathOrText{S_{#1}}}
\newcommand*{\recoveredContinuous}[1]{\mathOrText{R_{#1}}}
\newcommand*{\infectedSet}[1]{\mathOrText{I_{#1}'}}
\newcommand*{\susceptibleSet}[1]{\mathOrText{S_{#1}'}}
\newcommand*{\recoveredSet}[1]{\mathOrText{R_{#1}'}}

\title{Gradually Declining Immunity Retains the Exponential Duration of Immunity-Free Diffusion}

\author{%
    Andreas Göbel$^{*}$ \and Nicolas Klodt$^{*}$ \and Martin~S. Krejca$^{\dagger}$ \and Marcus Pappik$^{*}$
}

\publishers
 {
  	\footnotesize $^{*}$ Hasso Plattner Institute, University of Potsdam, Potsdam, Germany\\
  	\{tobias.friedrich, andreas.goebel, nicolas.klodt, marcus.pappik\}@hpi.de \\[1 ex]

  	$^{\dagger}$ LIX, CNRS, Ecole Polytechnique, Institut Polytechnique de Paris, Palaiseau, France \\
  	martin.krejca@polytechnique.edu
 }

\begin{document}

\maketitle

\begin{abstract}
  Diffusion processes pervade numerous areas of AI, abstractly modeling the dynamics of exchanging, oftentimes volatile, information in networks.
  A central question is how long the information remains in the network, known as \emph{survival time}.
  For the commonly studied SIS process, the expected survival time is at least super-polynomial in the network size already on star graphs, for a wide range of parameters.
  In contrast, the expected survival time of the SIRS process, which introduces temporary immunity, is always at most polynomial on stars and only known to be super-polynomial for far denser networks, such as expanders.
  However, this result relies on featuring \emph{full} temporary immunity, which is not always present in actual processes.
  We introduce the \emph{\cSIRS} process, which incorporates \emph{gradually declining} immunity such that the expected immunity at each point in time is identical to that of the SIRS process.
  We study the survival time of the \cSIRS process rigorously on star graphs and expanders and show that its expected survival time is very similar to that of the SIS process, which features no immunity.
  This suggests that featuring gradually declining immunity is almost as having none at all.
\end{abstract}

\section{Introduction}
\label{sec:introduction}

Diffusion processes on graphs are prevalent in various domains of AI research, modeling a broad range of applications, such as information diffusion~\cite{SunCM2023SocialInfluenceMaximization,JiangRF2023PoliticalLearning,LiuRGCXTL2023DOVID19,SunRZLY2022HypergraphAttentionNetwork,RazaqueRKAR2022Survey,SharmaHSL2021FakeNews}, rumor spreading~\cite{kempe2003maximizing}, infections~\cite{Pastor-SatorrasCVMV15Survey,leskovec2007cost}, and computer viruses~\cite{berger2005spread,BorgsCGS10Antidote}.
These processes usually share the same core mechanics, which are naturally expressed as extensions or variations of the well-known \emph{SI process} from epidemiology (see \cite{Pastor-SatorrasCVMV15Survey} for an extensive survey). The SI process is defined as a Markov chain on an underlying graph with vertices that are either \emph{infected} or \emph{susceptible} to an infection, and the infection spreads randomly over edges with an \emph{infection rate}~$\infectionRate$.

A very important variant of the SI process studied extensively both empirically and theoretically, e.g., \cite{PhysRevE.86.041125,ferreira2016collective,NamNS22SISinfinite,BorgsCGS10Antidote,ganesh2005effect} is the \emph{SIS} process (also commonly called \emph{contact process}), which allows infected vertices to become susceptible again.
In turn, susceptible vertices can become infected again, and a fundamental question is how long an infection survives on a graph.
This (random) time period is called the \emph{survival time} of the process, and it is closely tied to the expansion properties of the graph~\cite{ganesh2005effect}.
Most importantly, the SIS process exhibits a super-polynomial expected survival time (also called \emph{endemic}) already on star graphs with~$n$ leaves once the infection rate is at least in the order of only~$n^{-1/2 + \varepsilon}$, with $\varepsilon > 0$ being an arbitrary constant.
This result translates to any graph that contains a star as a subgraph, implying that the SIS process goes endemic on many natural networks, such as scale-free networks, as they contain large stars as subgraphs \cite{berger2005spread}.

One potential way to counteract endemic behavior is to introduce immunity against the infection into the system.
This is classically modeled with the \emph{SIRS} process, which introduces a \emph{recovered} state, in which vertices are immune to the infection.
Different from the SIS process, infected vertices transition now randomly into the recovered state, from which they transition into the susceptible state, based on a random rate~$\deimmunizationRate$ called the \emph{deimmunization rate}.
While there is a plethora of empirical results on the SIRS process, e.g., \cite{Wang_2017,PhysRevLett.86.2909,ferreira2016collective}, most of the theoretical results use some simplifying assumptions such as mean-field approaches~\cite{prakash2012threshold,Bancal10}.
To the best of our knowledge, the recent paper by \cite{friedrich2024irrelevance} is the first fully rigorous paper on this process.
In their work, the authors prove that the SIRS process has an at most polynomial expected survival time on stars for \emph{any} infection rate~$\infectionRate$ if the deimmunization rate~$\deimmunizationRate$ is constant. This result has recently been complemented by \cite{lam2024optimalboundsurvivaltime} with a matching lower bound.
This strongly contrasts the SIS process and shows that immunity can be beneficial in fighting back the infection.
\cite{friedrich2024irrelevance} furthermore show that the SIRS process becomes endemic on expander graphs, that is, dense graphs.
This suggests that the benefit of immunity degrades with the density of the graph.

The polynomial expected survival time of the SIRS process on stars by \cite{friedrich2024irrelevance} is a consequence from the fact that vertices that recover from the infection are \emph{fully} immune to re-infection until they spontaneously become susceptible. A main reason for this assumption in the SIRS process is that the resulting process is a time-homogeneous Markov chain, which simplifies the analysis. However, for a lot of real-life processes, this is a strong simplification. In infectious diseases, like SARS-CoV-2 for example, the number of antibodies drops continuously, which suggests that the resistance drops over time instead of just vanishing at some point~\cite{sanderson2021covid,wheatley2021evolution,gaebler2021evolution,vaccines10010064}. Similar phenomena occur in several social phenomena~\cite{ZhangLZLZZ16Overview}, for example, innovation adoption, where people adopt an innovative product, such as a phone, and do not require a new one until a certain time passes, after which the people get more receptive to buying a new product. In this example, the declining immunity models that a person can buy a new phone before the old one breaks if the new features are sufficiently better, which becomes more and more likely the more time passes.

In a newly proposed model, \cite{watve2024epidemiology} aim to capture the complexity of continuously declining immunity. The authors show via simulations that this model explains real-world phenomena well, such as multiple small outbreaks of an infection. However, this model features a multitude of parameters and is too complex to rigorously analyze its effects.

A more promising approach for a rigorous analysis is a line of research that models the declining immunity while keeping the process time-homogeneous \cite{aguas2006pertussis,fouchet2008visiting}. To this end, a new semi-recovered state is added, in which the immunity has partially worn off. In this state, vertices get infected at a smaller rate and go upon infection into a state of mild infection. Thus, this setting only provides a coarse, discretized version of declining immunity.
The analysis is done via simulations and mean-field theory.

While there have been studies of variants of declining immunity incorporated into infection models, to our knowledge, none of them include rigorous mathematical analyses.

\subsection{Main Contribution}

We study the impact of gradually degrading immunity when compared to the temporarily full immunity of the SIRS process.
To this end, we introduce and mathematically rigorously analyze the \emph{\cSIRS} process, which incorporates the gradual decline of immunity into the SIRS process such that the expected degree of immunity is the same as in the original SIRS process.
We study the expected survival time of the \cSIRS process on star and expander graphs, allowing us to compare our results to the SIS and the SIRS process.
Moreover, our lower bounds on the expected survival time hold for any graph that contains a star or expander as subgraph (\Cref{obs:monotone} and \Cref{cor:expander}, respectively).
\Cref{table:results} summarizes our results on stars.

We observe that while the definition of the SIRS and the \cSIRS process may seem similar, they behave fundamentally differently.
Although both processes have an at most logarithmic expected survival time for sufficiently small infection rates $\infectionRate \in \bigO{n^{-1/2}}$, the expected survival time of the \cSIRS process jumps immediately to a super-polynomial expected survival time for only slightly larger values of $\infectionRate \in \bigTheta{n^{-1/2 + \varepsilon}}$, with $\varepsilon \in \R_{>0}$ being an arbitrary constant, whereas the expected survival time of the SIRS process remains polynomial, regardless of~$\infectionRate$.
Thus, the \cSIRS process behaves far more closely to the SIS than to the SIRS process.
This shows that although the probability to become re-infected is the same in the SIRS and \cSIRS process for the first re-infection attempt, the gradual decline of immunity in the \cSIRS process has a dramatic impact on its survival time.
This impact is more comparable to a process that does not feature any immunity, although our lower bounds for the \cSIRS process are lower than for the SIS process, hinting at a still existing, albeit far less impactful benefit of immunity.
The fundamental reason for this different behavior is that many infection attempts in the \cSIRS process challenge the immunity repeatedly whereas this is not the case in the SIRS process.
Overall, our results suggest that incorporating immunity only has a substantial benefit if it can be guaranteed \emph{fully} for a sufficient amount of time.
Since our results carry over to supergraphs and since stars are present in many graphs, our results cover a wide range of different graph classes, such as scale-free graphs~\cite{berger2005spread,friedrich2024irrelevance}.

\begin{table}
  \centering
  \begin{tabular}{p{7 em}p{3.9 em}p{3.9 em}p{3.9 em}}
    \toprule
    infection rate                                                                                                    & SIS                                    & SIRS                                                                            & \cSIRS                                                                    \\
    \midrule
    $\infectionRate \in \bigO{\numberOfVertices^{-1/2}}$                                                              & $\bigO{\log(\numberOfVertices)}$       & $\bigO{\log(\numberOfVertices)}$                                                & $\bigO{\log(\numberOfVertices)}$                                          \\
    $\infectionRate \in \bigTheta{\numberOfVertices^{-1/2+\varepsilon}}$ \hspace*{0.5 em} and $\infectionRate \leq 1$ & $\bigOmega{\eulerE^{n^{\varepsilon}}}$ & $\widehat{\Theta}\left(n^{2\varepsilon\varrho}\right)$ Thm.~\ref{thm:SIRStight} & $\bigOmega{\eulerE^{n^{2\varepsilon/3}}}[\big]$ Thm.~\ref{thm:cSIRSlower} \\
    $\infectionRate >1$                                                                                               & $\bigOmega{\eulerE^{n^{1/2}}}[\big]$   & $\widehat{\Theta}(n^{\varrho})$                                                 & $\bigOmega{\eulerE^{n^{1/3}}}[\big]$                                      \\
    \bottomrule
  \end{tabular}
  \caption{Expected survival time $\E{T}$ of different processes on stars with $n$ leaves, starting with an infected center and no recovered leaves. The parameter $\lambda$ is the infection rate of the process, $\varrho$ the deimmunization rate of the SIRS process, assumed to be constant, and $\varepsilon$ any constant in $(0,1/2]$. The results from this paper have their theorem numbers below them. The logarithmic upper bounds and SIS results follow from~\protect\cite{ganesh2005effect}.
    The hat in the big-O notation means we omit sub-polynomial factors.
    The results in the last row follow from \Cref{obs:monotone} and~\protect\cite{friedrich2024irrelevance}.
  }
  \label{table:results}
\end{table}

Our results for the SIRS process on stars are an improvement over those by \cite{friedrich2024irrelevance}, who only proved an upper bound of $\widehat{\mathrm{O}}(n^\deimmunizationRate)$ for all values of~$\infectionRate$.
Our bounds are tight up to sub-polynomial factors and showcase different regimes for larger values of~$\infectionRate$. We note that \cite{lam2024optimalboundsurvivaltime} investigated the same question and independently found similar results that do not need the sub-polynomial factors.

In addition to our results on stars, we prove that the at least exponential survival time of the SIRS process with sufficiently large infection rate on expanders carries over to the \cSIRS process (\Cref{cor:expander}).
Hence, once the graph is sufficiently dense, the SIRS and the \cSIRS process start acting similarly.

From a mathematical perspective, the analysis of the \cSIRS process proves challenging, as it is not Markovian.
We approach this problem by introducing an intermediate process, where we relabel some of the vertices in the \cSIRS process such that the resulting process resembles a SIRS process with extra transitions that allow recovered vertices to become directly infected (\Cref{fig:states}, bottom right).
Due to its closer relation to the SIRS process, this intermediate process allows an easier analysis based on more traditional methods.

\section{Preliminaries}
\label{sec:preliminaries}

We define the general notation for infection processes as well as the SIRS process, following the notation by \cite{FrGoKlKrPa2024}.
The \cSIRS process is defined in \Cref{sec:cSIRS}.

Infection processes are random processes on labeled graphs where the dynamics are solely driven by Poisson processes and the labels of the vertices.
Poisson processes are one-dimensional Poisson point processes that output a random subset of the non-negative real numbers. We consider infection processes on finite, undirected graphs with~$n$ vertices. All big-O notation concerns asymptotics in this value of~$n$. Especially, a \emph{constant} is a value independent of~$n$.

The SIRS process is defined over a graph $G=(V,E)$ and two values $\infectionRate, \deimmunizationRate \in \R_{>0}$, which are the \emph{infection rate} and \emph{deimmunization rate}, respectively. To each edge $e \in E$, we assign a Poisson process~$M_e$ of rate~$\infectionRate$, and to each vertex $v \in V$, we assign two Poisson processes: $N_v$ with rate~$1$ and $O_v$ with rate~$\deimmunizationRate$. We call these processes \emph{clocks}, and when a time point $\timePoint \in \R_{\geq 0}$ is part of a clock's output, we say that the clock \emph{triggers} at~$\timePoint$.
We assume that all clocks evolve simultaneously and independently.
Since all clocks are Poisson processes, there is almost surely no point at which two clocks trigger at once, and each clock outputs almost surely a countably infinite number of triggers such that for each point, there exists a trigger that is at least as large.
Let $\{\gamma_i\}_{i\in\N_{\geq0}}$ with $\gamma_0=0$ denote the (random) sequence of all these triggers.

A SIRS process is a random process $(C_t)_{t \in \R_{\geq 0}}$ that partitions~$V$ for all time points $\timePoint \in \R_{\geq 0}$ into the set~$\susceptibleSet{\timePoint}$ of \emph{susceptible} vertices, the set~$\infectedSet{\timePoint}$ of \emph{infected} vertices, and the set~$\recoveredSet{\timePoint}$ of \emph{recovered} vertices, that is, $C_t = (\susceptibleSet{\timePoint}, \infectedSet{\timePoint}, \recoveredSet{\timePoint})$.
The value of~$C_0$ is given, and all other values are defined inductively based on $\{\gamma_i\}_{i\in\N_{\geq0}}$ such that the process is for all $i \in \N_{\geq 0}$ constant on $[\gamma_i, \gamma_{i+1})$.
That is, states only change when a clock triggers, especially, for all $t \in [0, \gamma_1)$, it holds that $\susceptibleSet{\timePoint} = \susceptibleSet{0}$, $\infectedSet{\timePoint} = \infectedSet{0}$, and $\recoveredSet{\timePoint} = \recoveredSet{0}$.
Depending on which clock triggers and the state of the involved vertices, we have the following transitions for all $i \in \N_{\geq 0}$ and any $s \in [\gamma_i, \gamma_{i + 1})$:
\begin{itemize}
  \item \textbf{Susceptible to infected.}
        Let $e = \{u, v\} \in E$ with $\gamma_{i + 1} \in M_e$ and $u \in \infectedSet{s}$ as well as $v \in \susceptibleSet{s}$.
        Then for all $t \in [\gamma_{i + 1}, \gamma_{i + 2})$ holds that $u, v \in \infectedSet{t}$.
        We say that \emph{$u$ infects~$v$} (at time~$\gamma_{i + 1}$).
  \item \textbf{Infected to recovered.}
        Let $v \in V$ with $\gamma_{i + 1} \in N_v$ and $v \in \infectedSet{s}$.
        Then for all $t \in [\gamma_{i + 1}, \gamma_{i + 2})$ holds that $v \in \recoveredSet{t}$.
        We say that \emph{$v$ recovers} (at time~$\gamma_{i + 1}$).
  \item \textbf{Recovered to susceptible.}
        Let $v \in V$ with $\gamma_{i + 1} \in O_v$ and $v \in \recoveredSet{s}$.
        Then for all $t \in [\gamma_{i + 1}, \gamma_{i + 2})$ holds that $v \in \susceptibleSet{t}$.
        We say that \emph{$v$ becomes susceptible} (at time~$\gamma_{i + 1}$).
\end{itemize}
In addition, we may call vertices that are not infected \emph{healthy}, and we may call the transition of an infected vertex to a non-infected state\footnote{In the SIRS process, this is the transition to the recovered state. In the SIS process, this is the transition to the susceptible state.} \emph{healing}.

During each of the transitions above, all vertices not mentioned remain in their respective set.
Moreover, note that not all triggers lead necessarily to a state change.
For example, if the clock of an edge triggers whose two incident vertices are already infected, nothing changes.

As states remain unchanged for most of the time, we consider in our analyses only those time points where a state change occurs.
Formally, we consider $\{\gamma_0\} \cup \{\gamma_i \mid i \in \N_{>0} \land \contactProcess_{\gamma_{i}} \neq \contactProcess_{\gamma_{i-1}}\}$, which we index by the increasing sequence $\{\timeContinuous{i}\}_{i \in \N_{\geq 0}}$.
For all $i \in \N$, we call $\timeContinuous{i}$ the $i$-th \emph{step} of the SIRS process.

We are interested in the first point in time where no vertex is infected, as such a state leads quickly to the (only) stable state where all vertices are susceptible.
We call $T \coloneqq \inf \{\timePoint \in \R_{\geq 0} \mid \infectedSet{\timePoint} = \emptyset\}$ the \emph{survival time} of the SIRS process, and we say that the infection \emph{dies out} or \emph{goes extinct} at~$T$.

\paragraph{Useful definitions and mathematical tools.}
For the graphs we consider, it is sufficient to only consider the number of vertices in each of the sets of the partition of~$V$. To this end, we define for all $\timePoint \in \R_{\geq0}$ the random variables $\susceptibleContinuous{\timePoint} = |\susceptibleSet{\timePoint}|$, $\infectedContinuous{\timePoint} = |\infectedSet{\timePoint}|$, and $\recoveredContinuous{\timePoint} = |\recoveredSet{\timePoint}|$. These random variables change based on events triggered by the clocks. We say an event \emph{happens at a rate of $r \in \R_{> 0}$} if and only if the set of clocks causing this event by triggering has a sum of rates equal to~$r$.

We use stochastic domination to bound the values of random processes with other random processes. We say that a random process $(X_t)_{t \in \R_{\geq 0}}$ \emph{dominates} another random process $(Y_t)_{t \in \R_{\geq 0}}$ if and only if there exists a coupling $(X'_t, Y'_t)_{t \in \R_{\geq 0}}$ such that for all $t \in \R_{\geq 0}$ holds $X'_t \geq Y'_t$.

One way we use domination is to connect our processes to well analyzed processes like the gambler's ruin process. For this process, we require the following known results.

\begin{theorem}[Gambler's ruin~{\protect\cite[page~345]{feller1957introduction}}]\label{pre:gamblersRuin}
  Let $(P_t)_{t \in \N}$ be the amount of money that a player has in a gambler's ruin game that has a probability of $p \neq 1/2$ for them to win in each step. Let $q=1-p$. The game ends at time~$T$ when the player either reaches the lower bound $\ell$ or the upper bound $u$ of money. Then
  \begin{enumerate}
    \item $\Pr{P_T = \ell} = \frac{1-(p/q)^{u-P_0}}{1-(p/q)^{u-\ell}}$;
    \item $\Pr{P_T = u} = \frac{1-(q/p)^{P_0-l}}{1-(q/p)^{u-\ell}}$.\qedhere
  \end{enumerate}
\end{theorem}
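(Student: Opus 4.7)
The plan is to reduce this to a standard linear recurrence via first-step analysis. Define $f(k) = \Pr{P_T = \ell}[P_0 = k]$ for all integers $\ell \leq k \leq u$. Conditioning on the outcome of the very next bet gives the two-term recursion
\[
  f(k) = p \, f(k+1) + q \, f(k-1)
\]
for all $\ell < k < u$, with boundary conditions $f(\ell) = 1$ and $f(u) = 0$. Since $p \neq q$, the characteristic polynomial $p x^2 - x + q = (p x - q)(x - 1)$ has two distinct roots $1$ and $q/p$, so the general solution takes the form $f(k) = A + B \, (q/p)^k$, with the constants $A, B$ pinned down by the two boundary conditions.

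Solving the resulting $2 \times 2$ linear system yields the closed form
\[
  f(k) = \frac{(q/p)^u - (q/p)^k}{(q/p)^u - (q/p)^\ell}.
\]
Multiplying numerator and denominator by $(p/q)^u$ produces exactly the expression claimed in part~1 with $k = P_0$. Part~2 then follows either by repeating the entire argument with the roles of the two boundaries swapped (noting that $1 - f$ satisfies the same recursion with flipped boundary values), or equivalently by using that $T$ is almost surely finite so $\Pr{P_T = u} = 1 - \Pr{P_T = \ell}$, and then algebraically rearranging $1 - f(P_0)$ into the stated form (multiplying top and bottom by $(p/q)^\ell$ does the trick).

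The only background fact requiring separate justification is that $T$ is almost surely finite; this is a classical computation. In any window of $u - \ell$ consecutive steps, the probability of hitting one of the two boundaries is at least $\min(p, q)^{u - \ell} > 0$, so partitioning the time axis into such windows yields a geometric tail for $T$. The remainder of the argument is entirely routine; I do not anticipate any serious obstacle. The main effort is the book-keeping in the algebraic simplification that matches the two equivalent forms of the closed-form solution with the expressions in the statement, and making sure the case $p = q = 1/2$ is correctly excluded, which is precisely what the hypothesis $p \neq 1/2$ guarantees.
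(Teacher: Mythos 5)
The paper does not prove this statement at all---it is imported verbatim from Feller's textbook as a known tool---so there is no in-paper proof to compare against. Your first-step-analysis argument (linear recurrence $f(k)=pf(k+1)+qf(k-1)$ with roots $1$ and $q/p$, boundary conditions at $\ell$ and $u$, plus the geometric-tail argument for $\Pr{T<\infty}=1$) is correct and is essentially the classical proof found in the cited source; the algebra matching both stated forms checks out, including the complementary-probability derivation of part~2.
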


In the analyses, terms like $\prod_{i=1}^{n}{\frac{i}{i+c}}$ show up. We bound their asymptotic behavior with the following theorem, which follows from result~(1) of \cite{tricomi1951asymptotic}.

\begin{theorem}[Ratio of Gamma Functions~{\protect\cite[page~133]{tricomi1951asymptotic}}]\label{thm:gamma}
  Let $n\in \N$, and let $\alpha,\beta \in \R_{>0}$ be constants. Then
  $
    \frac{\Gamma (n+\alpha)}{\Gamma (n+\beta)} \in \bigTheta{n^{\alpha - \beta}}$.
\end{theorem}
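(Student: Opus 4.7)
The plan is to derive the asymptotic directly from Stirling's formula, in the form $\log \Gamma(x) = (x - 1/2)\log x - x + (1/2)\log(2\pi) + O(1/x)$ as $x \to \infty$. Since $\alpha$ and $\beta$ are fixed positive constants and we let $n \to \infty$, both arguments $n + \alpha$ and $n + \beta$ diverge, so Stirling applies uniformly and the constants $\alpha, \beta$ only enter the hidden constants in the big-O terms.

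Concretely, I would first take logarithms of the ratio and substitute Stirling's expansion into both $\log \Gamma(n+\alpha)$ and $\log \Gamma(n+\beta)$. The additive $(1/2)\log(2\pi)$ cancels and the linear-in-$n$ parts contribute $-(\alpha - \beta)$, leaving
\[
\log \frac{\Gamma(n+\alpha)}{\Gamma(n+\beta)} = (n+\alpha - 1/2)\log(n+\alpha) - (n+\beta - 1/2)\log(n+\beta) - (\alpha - \beta) + O(1/n).
\]
Next I would Taylor-expand $\log(n+c) = \log n + c/n + O(1/n^2)$ for $c \in \{\alpha, \beta\}$ and collect like terms. The $\log n$ coefficients combine to $(\alpha - \beta)\log n$, while the terms of order $1/n$, after multiplication by the prefactors $n + \alpha - 1/2$ and $n + \beta - 1/2$, yield $\alpha - \beta + O(1/n)$, which cancels the $-(\alpha - \beta)$ inherited from Stirling's linear contribution. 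The net result is $\log(\Gamma(n+\alpha)/\Gamma(n+\beta)) = (\alpha - \beta)\log n + O(1/n)$, and exponentiating gives $\Gamma(n+\alpha)/\Gamma(n+\beta) = n^{\alpha-\beta}\bigl(1 + O(1/n)\bigr)$, which is $\Theta(n^{\alpha-\beta})$.

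The only real obstacle is the bookkeeping of subleading terms: one must verify that the $-(\alpha - \beta)$ from Stirling and the $+(\alpha - \beta)$ from the product $(n - 1/2)\cdot(\alpha/n - \beta/n)$ indeed cancel exactly, and that the Taylor remainders in $\log(1 + c/n)$ are pushed to high enough order so that no $O(1)$ or $O(\log n)$ residue is left behind. There is no conceptual difficulty, since $\alpha$ and $\beta$ are constants in $n$. As a sanity check, in the special case where $\alpha - \beta$ is a positive integer $k$, the functional equation $\Gamma(x+1) = x\Gamma(x)$ reduces the ratio to $\prod_{j=0}^{k-1}(n+\beta+j)$, a monic polynomial of degree $k$ in $n$ and hence trivially $\Theta(n^k)$; the Stirling argument is just the uniform extension of this elementary observation to arbitrary real exponents.
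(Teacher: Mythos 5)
Your derivation is correct, but note that the paper does not actually prove this statement: it is imported wholesale as a known result, citing Tricomi and Erd\'elyi's asymptotic expansion of $\Gamma(z+\alpha)/\Gamma(z+\beta)$, and the only thing the paper derives from it is \Cref{pre:gamma}. Your route is therefore genuinely different in character: you give a self-contained proof from Stirling's formula $\log\Gamma(x) = (x-\tfrac12)\log x - x + \tfrac12\log(2\pi) + O(1/x)$, and the bookkeeping you describe does close correctly --- the $(n+c-\tfrac12)\cdot(c/n)$ terms contribute $\alpha-\beta$, which exactly cancels the $-(\alpha-\beta)$ from the linear part of Stirling, leaving $\log\bigl(\Gamma(n+\alpha)/\Gamma(n+\beta)\bigr) = (\alpha-\beta)\log n + O(1/n)$ and hence the claimed $\Theta(n^{\alpha-\beta})$, in fact the stronger statement $n^{\alpha-\beta}(1+O(1/n))$. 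What the citation buys the paper is brevity and a full asymptotic series with explicit correction terms; what your argument buys is elementarity and transparency, since only the crude $\Theta$ bound is ever used downstream (in \Cref{pre:gamma} and the survival-time estimates). Your sanity check via the functional equation for integer $\alpha-\beta$ is also sound. The one thing I would make explicit if you wrote this out in full is that $\alpha,\beta>0$ guarantees both Gamma arguments stay away from the poles, so Stirling applies for all $n\ge 1$ and the implied constants in the $\Theta$ depend only on $\alpha$ and $\beta$, as the statement requires.
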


This yields the following corollary.

\begin{corollary}\label{pre:gamma}
  Let $n,m\in \N$, and let $c \in \R_{>0}$ be a constant. Then
  $  \prod_{i=m}^{n}{\frac{i}{i+c}} \in \bigTheta{\frac{m^c}{n^c}}$.
\end{corollary}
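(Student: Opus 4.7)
The plan is to rewrite the finite product as a ratio of Gamma functions and then apply \Cref{thm:gamma}. Using the identity $k(k+1) \cdots \ell = \Gamma(\ell+1)/\Gamma(k)$, I would express the numerator as $m(m+1) \cdots n = \Gamma(n+1)/\Gamma(m)$ and the denominator as $(m+c)(m+1+c) \cdots (n+c) = \Gamma(n+c+1)/\Gamma(m+c)$, giving
\[
    \prod_{i=m}^{n} \frac{i}{i+c} = \frac{\Gamma(n+1)}{\Gamma(n+c+1)} \cdot \frac{\Gamma(m+c)}{\Gamma(m)}.
\]
This splits the problem into estimating two factors, each of which is a ratio of shifted Gamma functions of the type treated by \Cref{thm:gamma}.

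For the first factor, the theorem applies directly with $\alpha = 1$ and $\beta = c+1$ (both strictly positive since $c>0$), yielding $\Gamma(n+1)/\Gamma(n+c+1) \in \bigTheta{n^{-c}}$. The main obstacle is the second factor, $\Gamma(m+c)/\Gamma(m)$, because applying the theorem directly would require $\beta = 0$, which is excluded by its hypotheses. I would circumvent this by shifting the argument via $\Gamma(m) = \Gamma(m+1)/m$, so that
\[
    \frac{\Gamma(m+c)}{\Gamma(m)} = m \cdot \frac{\Gamma(m+c)}{\Gamma(m+1)} \in m \cdot \bigTheta{m^{c-1}} = \bigTheta{m^c},
\]
where \Cref{thm:gamma} is now applied with the admissible choices $\alpha = c$ and $\beta = 1$.

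Multiplying the two asymptotic estimates then gives the claimed $\bigTheta{m^c / n^c}$. The only remaining concern is whether the implicit constants in \Cref{thm:gamma} behave uniformly when $m$ or $n$ is small, but in such regimes both the product and the target $m^c/n^c$ are bounded by absolute constants, so the big-Theta bound holds trivially by adjusting the constants. Thus the core of the argument is the algebraic rewriting into Gamma functions, and the key technical care lies in avoiding the forbidden $\beta = 0$ case by the index shift above.
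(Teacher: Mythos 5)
Your proof is correct and follows essentially the same route as the paper: both rewrite the product as ratios of Gamma functions and invoke \Cref{thm:gamma} twice. The only difference is the grouping---the paper divides $\prod_{i=1}^{n}\frac{i}{i+c}$ by $\prod_{i=1}^{m-1}\frac{i}{i+c}$, so both applications of \Cref{thm:gamma} use $\alpha=1$, $\beta=c+1$ and the $\beta=0$ issue you work around with the index shift never arises, but this is a cosmetic distinction.
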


\begin{proof}
  The result follows from \Cref{thm:gamma} and the fact that $\prod_{i=m}^{n}{\frac{i}{i+c}} = (\prod_{i=1}^{n}{\frac{i}{i+c}}) / (\prod_{i=1}^{m - 1}{\frac{i}{i+c}})$ and $\prod_{i=1}^{n}{\frac{i}{i+c}} = \frac{\Gamma(c+1)\cdot \Gamma(n+1)}{\Gamma(n+c+1)}$.
\end{proof}

\section{The \cSIRS Process}
\label{sec:cSIRS}

\begin{figure}
  \centering
  \includegraphics[width = \linewidth]{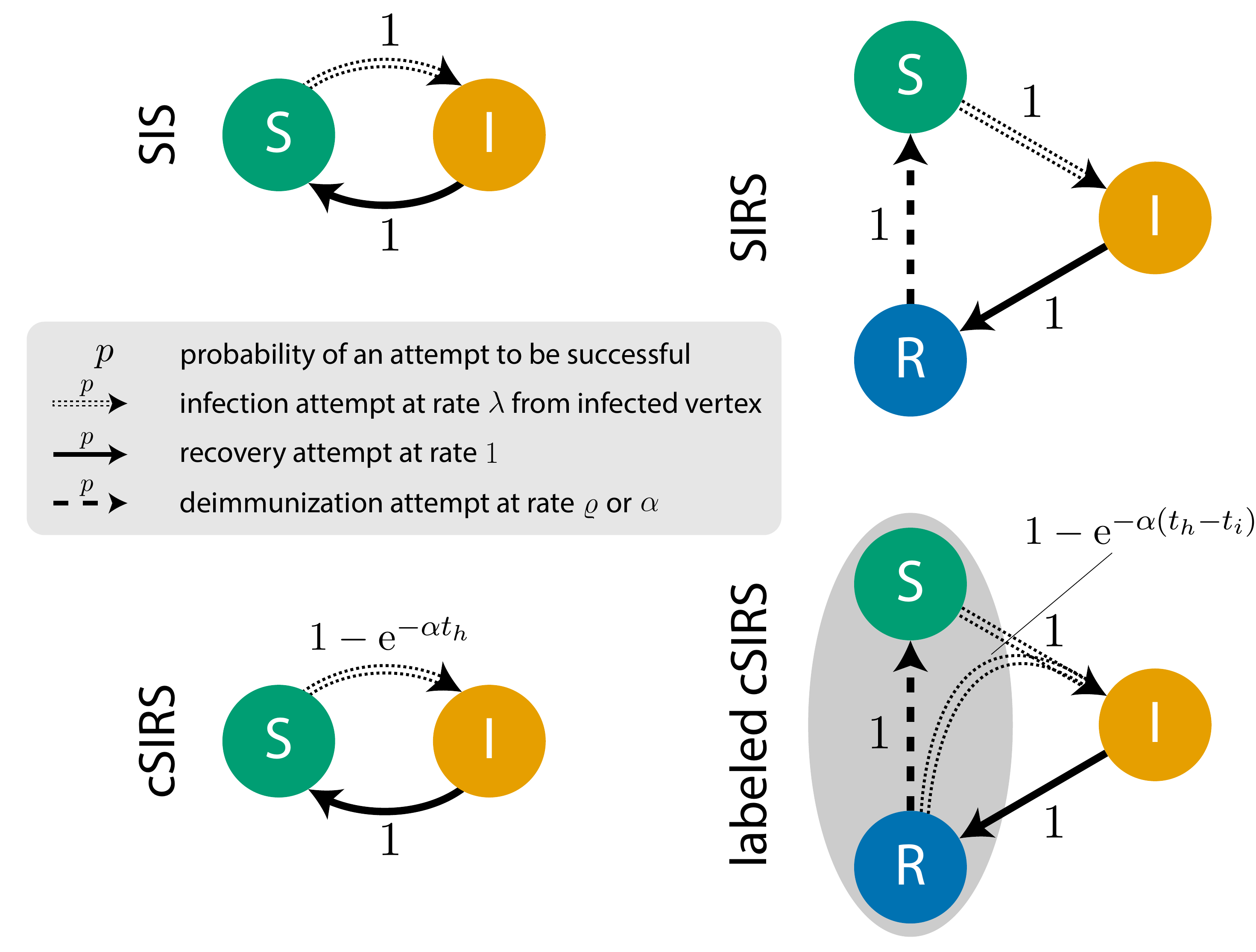}
  \caption{State transitions of a vertex in the shown processes. Vertices are susceptible (S), infected (I), or recovered (R). Edges represent the existence of a Poisson clock that triggers a transition attempt with a rate dependent on the arrow type. Note that edges to infected vertices represent one clock for each infected neighbor. The numbers on the arrows represent the probability of a successful attempt. We use~$t_h$ to denote the time passing since the vertex last healed, and~$t_i$ to denote the time passing since the last infection attempt after the vertex healed (or the last time the vertex healed, whichever is smaller).}
  \label{fig:states}
\end{figure}

We introduce the \cSIRS (continuous SIRS) process, which aims to model the continuous decay of immunity. It behaves mostly like the SIRS process, but instead of a recovered vertex being fully immune and losing this immunity after an exponentially distributed time, the immunity decreases exponentially in a deterministic way. More precisely, the process behaves like an SIS process where every vertex~$v$ additionally has a resistance $r_v \in [0,1]$, which is initialized with~$0$. Whenever a vertex changes its state from infected to susceptible, its resistance is set to 1. The resistance then exponentially declines with some \emph{resistance decay rate} $\decayRate \in \R_{>0}$. That is, after a time $t \in \R_{\geq 0}$ after healing, the resistance is $\eulerE^{-\decayRate t}$. When a vertex $v$ would get infected by an infection clock on an edge in the SIS process, it now gets infected with probability $1-r_v$ and otherwise remains susceptible. A depiction of these transitions is shown in \Cref{fig:states}, bottom left.

We note that the SIRS process is expressible in a similar manner. It is obtained by dropping the resistance from~$1$ to~$0$ after an exponentially distributed random time instantly instead of letting it decline gradually. By choosing the same~$\decayRate$ in the \cSIRS process as~$\deimmunizationRate$ in the SIRS process, the expected resistance in the SIRS process matches the actual resistance in the \cSIRS process at all times. Note that the \cSIRS process is not Markovian anymore, which removes some useful properties that are normally used to analyze processes. Below, we explain how we still mange to extend existing results to the new process.

\subsection{Useful Properties}

As a first observation, we note that the number of infected vertices in the \cSIRS process is dominated by the number of infected vertices in the SIS process with the same parameters. This means that all upper bounds on survival times in the SIS process carry over to the \cSIRS process.

\begin{restatable}{theorem}{cSIRSvSISdominance}
  \label{thm:cSIRSvSISdominance}
  Let $G$ be a graph and let $\infectionRate \in \R_{>0}$. Let $C$ be a cSIRS process on $G$ with infection rate $\infectionRate$ and a resistance decay rate $\decayRate \in \R_{>0}$, and let $C'$ be a SIS process on $G$ with infection rate $\infectionRate$ that starts with the same infected vertices as~$C$. Then there exists a coupling of~$C$ and~$C'$ such that the set of infected vertices of~$C$ is for all points in time a subset of the set of infected vertices of~$C'$.
\end{restatable}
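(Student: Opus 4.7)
My plan is to construct an explicit coupling of $\contactProcess$ and $\contactProcessProjection$. The two processes share the same underlying graph, so I would use the same edge infection clocks $\{M_e\}_{e \in E}$ of rate $\infectionRate$ and the same vertex healing clocks $\{N_v\}_{v \in V}$ of rate~$1$ for both. To accommodate the probabilistic rejection of an infection attempt in the \cSIRS process, I additionally attach to each trigger of each $M_e$ an independent random variable $U$ uniformly distributed on $[0,1]$, which only $\contactProcess$ consults; $\contactProcessProjection$ ignores it and treats every infection attempt from an infected neighbor as successful. Under this coupling, the marginals are correct: for $\contactProcessProjection$ by construction, and for $\contactProcess$ because, conditioned on the resistance $r_v$ at the trigger time (a deterministic function of the past healings of $v$, which are generated by the shared clock $N_v$), the acceptance probability is $\Pr{U \geq r_v} = 1 - r_v$, matching the definition of the \cSIRS process in \Cref{sec:cSIRS}.

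With the coupling fixed, I would prove by induction on the countable increasing sequence of all clock triggers $\{\gamma_i\}_{i \in \N_{\geq 0}}$ that the infected set of $\contactProcess$ at $\gamma_i$ is contained in the infected set of $\contactProcessProjection$ at $\gamma_i$. Since both processes are constant on each interval $[\gamma_i, \gamma_{i+1})$, this extends to all $\timePoint \in \R_{\geq 0}$. The base case $i = 0$ is the shared initial condition. For the inductive step, I split by the type of clock producing $\gamma_{i+1}$.

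The central case is an edge-clock trigger $\gamma_{i+1} \in M_e$ with $e = \{u, v\}$. If $u$ is not infected in $\contactProcess$ at step $\gamma_i$, then $\contactProcess$ is unchanged at this trigger, and the inclusion is trivially preserved because $\contactProcessProjection$ can only add vertices at an edge trigger. If $u$ is infected in $\contactProcess$, the inductive hypothesis gives $u$ infected in $\contactProcessProjection$; the only vertex $\contactProcess$ can possibly add at $\gamma_{i+1}$ is~$v$, and in $\contactProcessProjection$ the vertex $v$ is either already infected or becomes infected deterministically at this trigger, so the inclusion survives regardless of whether $U \geq r_v$. A healing trigger $\gamma_{i+1} \in N_v$ is analogous: if $v$ is infected in $\contactProcess$, it is infected in $\contactProcessProjection$ by induction and both processes remove $v$; otherwise $\contactProcess$ does not change, while $\contactProcessProjection$ either does nothing or only removes $v$, so the inclusion is preserved.

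I expect no single step to pose a deep obstacle. The main conceptual subtlety is keeping the bookkeeping of the resistance $r_v$ consistent, because $r_v$ depends on the trajectory of $\contactProcess$ while $U$ has to be independent of the past of both processes. Drawing the $U$'s once, up front, as an i.i.d.\ sequence indexed by the triggers of $\{M_e\}_{e \in E}$ secures this independence, so the induction runs cleanly and the inclusion holds almost surely for all $\timePoint \in \R_{\geq 0}$.
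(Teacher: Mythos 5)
Your proposal is correct and follows essentially the same route as the paper: couple the two processes by sharing all healing and infection clocks, then induct over the (almost surely countable) trigger sequence to show the infected set of the \cSIRS process stays contained in that of the SIS process. Your explicit introduction of i.i.d.\ uniform variables to realize the resistance-based rejection is a welcome formalization of a step the paper's proof leaves implicit, but it does not change the argument.
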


\begin{proof}
  In order to show the domination, we couple the two processes and show that in this coupling at all times, all vertices that are infected in $C$ are also infected in $C'$. The coupling is the trivial coupling in which all healing clocks and infection clocks trigger at the same time in both processes.

  We prove our claim via induction on the time steps in which one of the two processes changes state to show that all infected vertices in $C$ are also infected in $C'$. Recall that the set of events is almost surely countable and that we defined an ordering on them. Both processes start with the same set of infected vertices, so the claim holds in the base case. Now assume the statement holds at time step $t \in \N$. To show that the statement holds at time steps $t+1$, we only have to consider the vertices that changed state. Note that almost surely only one vertex changes in each time step. If the change is a healing clock, by the coupling, the vertex is susceptible after the change in both processes, so the statement is true. If the change is an infection clock, a vertex $v$ is infected by an infected vertex $u$ in one of the two processes. By the induction hypothesis, $u$ is therefore definitely infected in $C'$ as $u$ being infected in $C$ implies it being infected in $C'$ as well. Therefore $v$ is infected in $C'$ at time step $t+1$ and the induction statement is true.
\end{proof}

To obtain lower bounds on the survival time, we modify the \cSIRS process to be closer to the SIRS process. This makes it easier to apply previous results to this new process and to talk about the SIRS and the new process using the same notation. To this end, we define the \emph{\placy} process. It is almost equivalent to the \cSIRS process with the only difference that we extend the SIRS process instead of the SIS process. As it is just a SIRS process with an extra rate to infect recovered vertices, results from the SIRS process are much easier to adapt to this definition. The success probability for infection attempts is chosen in a way such that relabeling all recovered vertices in the \placy process to susceptible yields the \cSIRS process. The definition is visualized in \Cref{fig:states}.

\begin{definition}[\placy]
  A \placy process on a graph $G$ with infection rate $\infectionRate \in \R_{>0}$ and resistance decay rate $\decayRate \in \R_{>0}$ is defined like a SIRS process with $\deimmunizationRate = \decayRate$, with the difference that recovered vertices have a possibility to become infected. For all $\timePoint \in \R_{\geq 0}$ and each vertex~$v$ that is recovered at time~$\timePoint$, let~$t_h$ be the time that passed from the last time that~$v$ recovered, that is, for $t^* \coloneqq \sup\{s \in \R_{\geq 0} \mid s \leq \timePoint \land v \textrm{ recovers at } s\}$, let $t_h = \timePoint - t^*$, where we define $\sup \emptyset = 0$.
  Moreover, let~$t_i$ be the time that passed since the last infection attempt involving~$v$, or let $t_i = t_h$ if no such attempt occurred since~$t^*$. That is, let $t_i = \timePoint - \max\big\{t^*, \sup\{s \in \R_{\geq 0} \mid s < \timePoint \land \exists \{u, v\} \in E\colon (s \in M_{\{u, v\}} \land u \textrm{ is infected at } s)\}\big\}$. Then each infection attempt at~$v$ is successful with probability $1-\eulerE^{-\decayRate (t_h-t_i)}$.
\end{definition}

We note that we believe that adding an extra rate to infect recovered vertices directly should increase the survival time of the infection. We show in \Cref{cor:expander} that this believe is correct for expander graphs. However, to the best of our knowledge, there is no general result for the SIRS process that proves this belief. There are some scenarios where infecting a recovered vertex leads to it being recovered instead of susceptible later which could potentially block a relevant infection later.
Thus, we argue differently in the following by showing that the \cSIRS process is equivalent to the \placy in which all recovered vertices are relabeled to be susceptible.

\begin{restatable}{observation}{PLACY}
  \label{obs:PLACY}
  A \cSIRS process and a \placy process with the same parameters (including the same initialization) can be coupled in a way such that at each time they have exactly the same set of infected vertices. Especially, they have the same distribution of survival times.
\end{restatable}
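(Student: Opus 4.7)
The plan is to couple the \cSIRS and \placy processes by sharing the infection clocks $M_e$ and recovery clocks $N_v$ across both, while generating the deimmunization clocks $O_v$ independently for the \placy process. I would then argue by induction on the ordered sequence of clock triggers that the infected sets of the two processes coincide at all times. Deimmunization triggers only flip the internal R/S label in the \placy without changing its infected set, and recovery triggers heal the same vertex in both processes simultaneously, so both cases preserve the coincidence trivially. The only nontrivial inductive step is an infection attempt at time $s_k$ along an edge $\{u,v\}$ where $u$ is infected (in both) and $v$ is not (in both).

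For such an attempt, let $t^*$ denote the time at which $v$ last healed; this is identical in both processes by the induction hypothesis, and the never-infected case is trivial because in the \cSIRS we have $r_v = 0$ while in the \placy the vertex is susceptible, so both succeed with probability $1$. By definition, the \cSIRS attempt succeeds with probability $1 - \eulerE^{-\decayRate(s_k - t^*)}$. The core computation of the proof is to verify that the marginal success probability in the \placy, conditional on the common history, equals exactly this same quantity. The key observation is that the event ``$v$ is not infected just before $s_k$'' forces $O_v$ not to have fired before the previous attempt time $s_{k-1}$; otherwise $v$ would have been susceptible at that earlier attempt and would have been infected with probability $1$, contradicting the induction hypothesis. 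By the memoryless property, the conditional probability that $O_v$ fires in $(s_{k-1}, s_k]$ is therefore $1 - \eulerE^{-\decayRate(s_k - s_{k-1})}$.

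Plugging $t_h = s_k - t^*$ and $t_i = s_k - s_{k-1}$ into the R-formula of the \placy and splitting on whether $v$ has become susceptible in $(s_{k-1}, s_k]$ yields the telescoping identity
\begin{equation*}
  \bigl(1 - \eulerE^{-\decayRate(s_k - s_{k-1})}\bigr) + \eulerE^{-\decayRate(s_k - s_{k-1})}\bigl(1 - \eulerE^{-\decayRate(s_{k-1} - t^*)}\bigr) = 1 - \eulerE^{-\decayRate(s_k - t^*)},
\end{equation*}
which matches the \cSIRS exactly. Since the marginal success probabilities agree, I would couple the outcome of each attempt through a shared uniform coin, closing the induction; the statement about survival times then follows because $T$ is a deterministic function of the sequence of infected sets. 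The main obstacle I anticipate is bookkeeping: precisely setting up a filtration in which the event ``all prior attempts failed'' translates cleanly into a statement about $O_v$ via memorylessness, and verifying that intermediate deimmunization-only events in the \placy (which produce no counterpart in the \cSIRS) do not break the conditional reasoning. Once the filtration is in place, the algebra reduces to the single telescoping identity above.
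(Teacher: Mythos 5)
Your proposal is correct and follows essentially the same route as the paper's proof: share the healing and infection clocks, reduce everything to the success probability of a single infection attempt, observe that a failed prior attempt forces the vertex to still be recovered in the \placy process, and verify the telescoping identity $\bigl(1-\eulerE^{-\decayRate t_i}\bigr)+\eulerE^{-\decayRate t_i}\bigl(1-\eulerE^{-\decayRate(t_h-t_i)}\bigr)=1-\eulerE^{-\decayRate t_h}$. The only cosmetic difference is that you phrase the coupling via a shared uniform coin after computing the marginal, whereas the paper declares success to be shared across the two processes and then checks the marginals agree.
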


\begin{proof}
  We couple the processes by letting all healing and infection clocks trigger at the same time. Moreover, an infection in the \cSIRS process is successful if and only if the same trigger is successful in the \placy process. We show that this is a valid coupling, that is, one whose transition probabilities remain truthful to the original processes.

  By definition, infected vertices heal at a rate of 1 in both processes and attempt to infect non-infected vertices at a rate of~$\lambda$. It remains to show that all infection attempts are successful with the same probability in both processes. To this end, consider an infection attempt on a non-infected vertex $v$ in both processes. We make a case distinction on whether there was a failed infection attempt since the last time~$v$ healed.

  If there was not, then $t_h = t_i$ and thus the probability to transition in the \placy process directly from recovered to infected is~$0$, by definition. Hence, the attempt is only successful in the \placy process if the vertex transitioned into the susceptible state in the time window since it recovered. The probability for this happening is $1-\eulerE^{-\decayRate t_h}$, which matches the probability for a successful infection in the \cSIRS process.

  If there was a failed infection attempt before, the vertex was still recovered at that time in the \placy process. In the $t_i$ time window, the vertex transitions into the susceptible state with probability $p= 1-\eulerE^{-\decayRate t_i}$. The next infection attempt in the \placy process is successful if the vertex transitions before into the susceptible state or if the attempt on the recovered vertex is successful. Hence, for the success probability $q$ of the infection attempt holds
  \begin{align*}
    q & = (1-\eulerE^{-\decayRate t_i})\cdot 1 + \eulerE^{-\decayRate t_i} \cdot (1-\eulerE^{-\decayRate(t_h-t_i)}) \\
      & = 1-\eulerE^{-\decayRate t_i} + \eulerE^{-\decayRate t_i} - \eulerE^{-\decayRate t_h}                       \\
      & = 1 - \eulerE^{-\decayRate t_h}.
  \end{align*}

  As this matches the probability of a successful infection attempt in the \cSIRS process again, the coupling stated at the beginning is correct.
\end{proof}

\section{\cSIRS and SIRS on Stars}

It is known that the SIRS process never survives super-polynomially long on stars when the deimmunization rate is constant~\cite{FrGoKlKrPa2024}. This is in big contrast to the SIS process, in which there is a relatively tight threshold at which the survival time goes from logarithmic to super-polynomial (see also \Cref{table:results}).
We aim to see how the \cSIRS process compares to these results.
To this end, we analyze the expected survival time of both the \cSIRS process (\Cref{thm:cSIRSlower}) and the SIRS process (\Cref{thm:SIRStight}) at this threshold---the latter, since the only existing result so far~\cite{FrGoKlKrPa2024} is only an upper bound.
We note that the lower bounds of our two main theorems above hold for \emph{any} graph that contains a star as a subgraph, as long as this subgraph satisfies the starting conditions mentioned in the theorems.
As various graph classes contain large stars, such as scale-free graphs, this typically translates into bounds based on the overall graph size, not just the star size \cite{berger2005spread}.

For the SIRS process, we show in \Cref{thm:SIRSlower,thm:SIRSupper} almost tight polynomial upper and lower bounds for the survival time above the threshold, showing that the upper bound by \cite{FrGoKlKrPa2024} is almost tight.
Our two theorems directly imply the following theorem.

\begin{restatable}{theorem}{SIRStight}
  \label{thm:SIRStight}
  Let $G$ be a star with $\numberOfVertices$ leaves. Let \contactProcess be a SIRS process on $G$ with infection rate~$\infectionRate\leq 1$ with $\infectionRate \in \Omega(n^{-1/2})$ and with constant deimmunization rate \deimmunizationRate that starts with infected center and no recovered leaves. Let $T$ be the survival time of $C$. Then $\E{T}  \in \widehat{\Theta}\left((\infectionRate^2\numberOfVertices)^{\deimmunizationRate}\right)$.
\end{restatable}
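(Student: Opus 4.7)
The theorem follows by combining the matching lower and upper bounds of \Cref{thm:SIRSlower,thm:SIRSupper}, so my plan is to outline how to establish those two bounds. By symmetry of the star, the induced Markov chain is fully characterized by a triple $(c, i, r)$, where $c$ encodes the state of the center, and $i, r$ count the infected and recovered leaves. Extinction corresponds to $i = 0$ together with the center being non-infected, and the goal is to bound the expected first hitting time of this absorbing set.

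The intuition behind the shape of the bound is as follows. A typical cycle consists of the center being infected, healing, and possibly being re-infected via one of its infected leaves. Each cycle takes expected time $\Theta(1)$. During a cycle, the center infects roughly $\infectionRate(n - r)$ leaves, and each of those subsequently has a probability of order $\infectionRate \deimmunizationRate / (1 + \deimmunizationRate)$ of re-infecting the center after it heals. Hence, the per-cycle re-infection probability is of order $\infectionRate^2 (n - r) \deimmunizationRate$, which explains the factor $\infectionRate^2 n$. The exponent $\deimmunizationRate$ in turn emerges from the slower dynamics of $r$: the relevant absorption probabilities involve products of the form $\prod_{j = 1}^{r} \frac{j}{j + \deimmunizationRate}$, whose asymptotic scaling is $r^{-\deimmunizationRate}$ by \Cref{pre:gamma}.

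For the upper bound in \Cref{thm:SIRSupper}, I would define a potential $\Phi(c, i, r)$ that vanishes on the extinction set and otherwise takes the form $\prod_{j = 1}^{r} \frac{j}{j + \deimmunizationRate}$, verify by a case analysis over the possible transitions that the drift of $\Phi$ is multiplicative toward zero at a rate matching the desired bound, and then apply standard multiplicative drift analysis to obtain the expected extinction time of $\widehat{O}\bigl((\infectionRate^2 n)^{\deimmunizationRate}\bigr)$. For the lower bound in \Cref{thm:SIRSlower}, I would use \Cref{pre:gamblersRuin} applied to a suitable projection of the process onto $r$ to show that, starting from the given initial state, $r$ reaches its quasi-stationary level $r = \Theta(\infectionRate n)$ with constant probability, and then argue via a matching drift in the opposite direction that from this level the expected time to extinction is at least $\widehat{\Omega}\bigl((\infectionRate^2 n)^{\deimmunizationRate}\bigr)$.

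The main obstacle will be the genuine interaction between the fast $(c, i)$ dynamics and the slow $r$ dynamics. Since $i$ depends non-trivially on both $c$ and $r$, there is no clean two-timescale separation; I would address this by analyzing the chain embedded at the jump times of $r$ and arguing via concentration that the embedded process is well-approximated by a one-dimensional birth-death chain, with deviations contributing only the sub-polynomial factors absorbed into the $\widehat{\Theta}$ notation.
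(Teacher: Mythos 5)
Your opening step---that \Cref{thm:SIRStight} is nothing more than the conjunction of \Cref{thm:SIRSlower,thm:SIRSupper}---is exactly how the paper obtains it, and your intuition paragraph correctly identifies the governing quantity: the race between the $\Theta(\infectionRate\numberOfVertices)$ infected leaves healing at rate $1$ and the recovered center deimmunizing at rate $\deimmunizationRate$, whose outcome probability is a product $\prod_{j}\frac{j}{j+\deimmunizationRate}\in\Theta\left((\infectionRate^{2}\numberOfVertices)^{-\deimmunizationRate}\right)$ via \Cref{pre:gamma}. That much matches the paper.

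The concrete strategies you then propose for the two ingredient theorems have genuine gaps. For the upper bound, the potential $\Phi(c,i,r)=\prod_{j=1}^{r}\frac{j}{j+\deimmunizationRate}$ (set to $0$ at extinction) cannot carry a multiplicative drift argument of the kind you describe: it depends only on the number of recovered leaves, it is \emph{smallest} among its nonzero values precisely in crowded states far from extinction, and its nonzero range is only $[\Theta(\numberOfVertices^{-\deimmunizationRate}),1]$, so the multiplicative drift theorem would return a bound of order $\log(\Phi_{\max}/\Phi_{\min})/\delta\in\Theta(\deimmunizationRate\log\numberOfVertices)/\delta$; to recover $(\infectionRate^{2}\numberOfVertices)^{\deimmunizationRate}$ you would need the per-step drift $\delta$ to be $\Theta\left((\infectionRate^{2}\numberOfVertices)^{-\deimmunizationRate}\right)$, which is exactly the per-cycle extinction probability you would still have to compute by hand---and extinction is a jump to $\Phi=0$ from states where $\Phi$ is not small, which multiplicative drift does not capture at all. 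Likewise, for the lower bound, ``a matching drift in the opposite direction'' is not a tool that yields a polynomial hitting-time lower bound with a parameter-dependent exponent $\deimmunizationRate$; negative-drift theorems give exponential bounds. The paper instead argues by an explicit phase decomposition: center-infected phases end with $\Theta(\infectionRate\numberOfVertices)$ infected leaves with high probability (\Cref{lem:endProbability}, a gambler's ruin), a center-recovered phase reduces the infected leaves to $\infectionRate^{-1}$ with probability $\Theta\left((\infectionRate^{2}\numberOfVertices)^{-\deimmunizationRate}\right)$ (the Gamma-ratio product, \Cref{lem:SIRSlower} and the upper-bound analogue in \Cref{thm:SIRSupper}), the ensuing center-susceptible phase kills the infection with constant probability, and a geometric/renewal count of cycles finishes both directions. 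Your closing worry about the interaction of the fast $(c,i)$ dynamics with the slow $r$ dynamics is real, but the paper sidesteps it not by an embedded birth--death chain on $r$ but by conditioning on the global event that the recovered leaves never exceed a fixed fraction of $\numberOfVertices$ (\Cref{lem:recoveredBound}) and by bookkeeping the leaves lost in center-susceptible and center-infected phases separately (\Cref{lem:susInfPhase,cor:susInfPhase}).
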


For the \cSIRS process, we show a behavior very similar to the SIS process.
That is, from the same value of the infection rate onward as in the SIS process, the \cSIRS process exhibits a super-polynomial expected survival time.

\begin{restatable}{theorem}{cSIRSlower}
  \label{thm:cSIRSlower}
  Let $G$ be a star with $\numberOfVertices$ leaves. Let $\varepsilon \in (0,1/2]$ be a constant and let \contactProcess be a \placy process or \cSIRS process on $G$ with infection rate~$\infectionRate\leq 1$ with $\infectionRate \in \bigOmega{n^{-1/2+\varepsilon}}$ and with constant resistance decay rate \decayRate that starts with infected center and no recovered leaves. Let~$T$ be the survival time of $C$. Then $\E{T}  \in \bigOmega{\eulerE^{\numberOfVertices^{2\varepsilon/3}}}[\big]$.
\end{restatable}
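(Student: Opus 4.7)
By Observation~\ref{obs:PLACY}, it suffices to prove the lower bound for the \placy process, whose survival time has the same distribution as that of the \cSIRS. This reformulation is convenient because the \placy has an explicit SIRS backbone and admits standard drift arguments, while its extra recovered\-/to\-/infected rate separately encodes the gradual decay of immunity.

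The plan is a two\-/phase ``reach and stay'' strategy with a threshold $k = \bigTheta{n^{2\varepsilon/3}}$. In the \emph{reach} phase, I will show that, starting from the infected center and no recovered leaves, within $\bigO{1}$ time the process enters a configuration with the center infected and at least $k$ infected leaves, with probability bounded below by an absolute constant. The center remains infected for a constant duration with constant probability (its healing clock is $\mathrm{Exp}(1)$), and during this window each of the $n$ susceptible leaves is independently attacked at rate $\infectionRate \in \bigOmega{n^{-1/2+\varepsilon}}$, producing a number of new infections with mean $\bigOmega{\infectionRate n} = \bigOmega{n^{1/2+\varepsilon}} \gg k$. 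A Chernoff bound concentrates this count around its mean, so at least $k$ leaves are infected with probability $1 - \smallO{1}$.

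In the \emph{stay} phase, I will show that from a configuration with at least $k$ infected leaves and the center infected, the expected extinction time is $\eulerE^{\bigOmega{k}}$. The strategy is to stochastically lower\-/bound the infected\-/leaf count $\infectedContinuous{\timePoint}$ by a biased birth\-/death chain: in the regime $\infectedContinuous{\timePoint} \lesssim n^{1/2+\varepsilon}$ with the center infected, up\-/steps (new infections of susceptible leaves, at rate $\bigTheta{\infectionRate n} \gg \infectedContinuous{\timePoint}$) dominate down\-/steps (healings, at rate $\infectedContinuous{\timePoint}$) by a constant factor, and the additional recovered\-/to\-/infected transitions of the \placy only bias the chain further in favor of survival. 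After accounting for the center's brief healthy periods (during which $\infectedContinuous{\timePoint}$ may decrease by a controlled amount), the chain has win probability $p$ bounded away from $1/2$ by an absolute constant, so the gambler's ruin formula (Theorem~\ref{pre:gamblersRuin}) yields an expected hitting time of $0$ from state $k$ of order $(q/p)^{\Theta(k)} = \eulerE^{\bigOmega{k}}$. Combining the two phases via the law of total expectation gives $\E{T} \in \bigOmega{\eulerE^{n^{2\varepsilon/3}}}$.

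The main obstacle is the non\-/Markovian structure of the \cSIRS: each recovered leaf's infection\-/success probability depends on its individual history $(t_h, t_i)$. The \placy reformulation moves us closer to the Markov setting, but recovered\-/to\-/infected transitions still carry history dependence that must be controlled in the drift analysis. I expect that a coarse partition of recovered leaves by their recovery age---separating nearly\-/susceptible ``old'' recovered leaves from essentially\-/immune ``fresh'' ones---will give a tractable Markovian lower bound. The specific exponent $2\varepsilon/3$ emerges from balancing the threshold $k$ reachable in Phase~1 against the drift bias of the birth\-/death chain in Phase~2; one should not expect this exponent to be tight, consistent with the paper's remark that the bound is weaker than the matching SIS lower bound $\eulerE^{\bigOmega{n^\varepsilon}}$.
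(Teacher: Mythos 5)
Your high\-/level ``reach and stay'' outline matches the paper's strategy in spirit, but the quantitative core of your \emph{stay} phase does not work, and the choice of threshold $k \in \bigTheta{n^{2\varepsilon/3}}$ is the symptom. The bottleneck of this process is not a biased birth--death walk on the number of infected leaves: on a star, leaves can only be infected by the center, so every time the center heals, \emph{all} new infections stop until the center is reinfected, and the center's own resistance is close to $1$ right after healing. An attack on the center at time $s$ after it heals succeeds only with probability $1-\eulerE^{-\decayRate s} \approx \decayRate s$, so with $I$ infected leaves (each healing at rate $1$) the expected total number of successful reinfections of the center over the whole healthy phase is of order $\int_0^\infty \infectionRate I \eulerE^{-s}\decayRate s \,\d s = \decayRate\infectionRate I$. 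With your $k = \bigTheta{n^{2\varepsilon/3}}$ and $\infectionRate = \bigTheta{n^{-1/2+\varepsilon}}$ this is $\bigTheta{n^{-1/2+5\varepsilon/3}} = \smallO{1}$ for $\varepsilon < 3/10$: a single center\-/healing kills the infection with high probability. No drift argument on the leaves can rescue this; the threshold must be $\bigTheta{\infectionRate\numberOfVertices} = \bigTheta{n^{1/2+\varepsilon}}$, which your own \emph{reach} computation already attains.

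Relatedly, your claim that from $m$ infected leaves the survival time is $\eulerE^{\bigOmega{m}}$ by a constant\-/bias gambler's ruin cannot be right even with the corrected threshold, since it would yield $\eulerE^{\bigOmega{n^{1/2+\varepsilon}}}$, far exceeding the theorem's bound and the known SIS lower bound. The exponent $2\varepsilon/3$ does not come from a leaf count; it comes from the cost of center\-/healthy phases. The paper fixes a window length $t' = \numberOfVertices^{-2\varepsilon/3}$ and shows (i) with $\bigTheta{\infectionRate\numberOfVertices}$ infected leaves the center fails to be reinfected within time $t'$ only with probability $\eulerE^{-\bigTheta{\infectionRate^2\numberOfVertices (t')^2}} = \eulerE^{-\bigTheta{n^{2\varepsilon/3}}}$ (\Cref{lem:infectingCenter}), and (ii) during each such window only a $\bigTheta{t'}$\-/fraction of the infected leaves heal (\Cref{lem:healingLeaves}), so $\bigTheta{(t')^{-1}} = \bigTheta{n^{2\varepsilon/3}}$ consecutive center\-/healthy phases must all fail to restore the count (each restoration succeeding with constant probability by \Cref{lem:endProbability}) before the infected population can even halve. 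Both failure events have probability $\eulerE^{-\bigTheta{n^{2\varepsilon/3}}}$, which is where the bound originates. Your sketch defers exactly this computation to ``accounting for the center's brief healthy periods,'' but those periods are the entire content of the proof and they are not brief: each necessarily lasts about $n^{-2\varepsilon/3}$ and costs $\bigTheta{n^{1/2+\varepsilon/3}}$ infected leaves. Your proposed partition of \emph{recovered leaves} by age also aims at the wrong object --- the leaves' immunity is handled crudely via \Cref{lem:recoveredBound}; it is the \emph{center's} post\-/healing immunity that must be tracked.
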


\Cref{thm:SIRStight,thm:cSIRSlower} show that the immunity in each process has drastically different effects, although the expected degree of immunity is the same in either process.
In the SIRS process, the full immunity guarantees that the expected survival time does not become super-polynomial.
However, in the \cSIRS process, the expected survival time is very similar to that of the SIS process (see also \Cref{table:results}), the latter of which does not exhibit any immunity at all.
Hence, immunity does not seem to be very useful if it cannot be guaranteed at full levels for a certain amount of time.

Our mathematical analysis for the SIRS and the \cSIRS process is very similar, as the processes are defined rather similarly. Thus, we first prove useful statements that hold for both processes. We note that instead of analyzing the \cSIRS process directly, we analyze the \placy process in order to use its shared notation with the SIRS process.

Our general proof strategy is to first show that it is very unlikely that there are ever too many recovered leaves. In turn, while there are not many infected vertices, there are almost always enough susceptible vertices to infect. Since infections cannot spread once the center of the star is not infected, we split the processes into \emph{center-healthy} phases and \emph{center-infected} phases. We show that center-infected phases have a constant probability to end with at least $\infectionRate d \numberOfVertices$ infected vertices for some constant $d \in \R_{>0}$. We then show that the process needs a lot of center-healthy phases in order to heal these $\infectionRate d \numberOfVertices$ leaves. As each center-infected phase in between has a high enough probability to get back up to these many infected leaves, the process survives relatively long until then. However, the actual details and results differ a lot between the two processes in this last step.

Note that our lower bounds for the survival time also hold when the star is only a subgraph of the underlying graph and higher infection rates lead to stronger lower bounds. This normally does not have to be the case in the SIRS or \cSIRS process as additional infections could lead to more recovered vertices that block the infection.

\begin{restatable}{observation}{monotone}\label{obs:monotone}
  Let $G$ be a star with $\numberOfVertices$ leaves. Let \contactProcess be a SIRS process or a \placy process on $G$ with infection rate~$\infectionRate$. All of our lower bounds for the expected survival time of \contactProcess also hold when the process runs on a supergraph of $G$ (with same parameters and starting configuration on vertices of $G$). The lower bounds also hold for processes on the same graph with infection rate~$\infectionRate' \in \R_{>\infectionRate}$. In particular, our lower bounds for $\infectionRate =1$ also hold for all infection rates $\infectionRate'>1$.
\end{restatable}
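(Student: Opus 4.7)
The plan is to prove the observation via a coupling. Couple the process $\contactProcess$ on the supergraph $G'$ (with infection rate $\infectionRate' \geq \infectionRate$) with a corresponding star process $\tilde{C}$ on $G$ (with infection rate $\infectionRate$) by sharing all Poisson clocks on vertices of $V(G)$ (healing, and deimmunization in the SIRS case) and on edges of $E(G)$, thinning the rate-$\infectionRate'$ edge clocks of $\contactProcess$ to produce the rate-$\infectionRate$ clocks of $\tilde{C}$ when $\infectionRate' > \infectionRate$. All clocks on $V(G') \setminus V(G)$ and on $E(G') \setminus E(G)$ fire only in $\contactProcess$. The goal is to show that any lower bound on $\E{\tilde{T}}$ established in \Cref{thm:cSIRSlower} and the lower-bound portion of \Cref{thm:SIRStight} transfers to the supergraph survival time $T$.

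The main obstacle, already flagged in the paper text, is that the coupling does not give a sample-path domination $T \ge \tilde{T}$: extra infection events in $\contactProcess$ can move leaves of $V(G)$ into the recovered state, and a later shared-clock attempt from the center then succeeds in $\tilde{C}$ but fails in $\contactProcess$. One can construct sample paths on which $\tilde{C}$ continues to sustain the infection while $\contactProcess$ dies out, so the argument has to proceed at the level of the concrete events driving the lower-bound proofs rather than deterministically.

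My plan is to revisit those events, which, following the strategy outlined in the paper text, are: the decomposition of time into center-infected and center-healthy phases; the assertion that each center-infected phase ends with $\bigOmega{\infectionRate \numberOfVertices}$ infected leaves with constant probability; and the assertion that each center-healthy phase is followed by a re-infection with constant probability. Each of these events is driven only by shared clocks---the center's rate-$1$ healing clock and the rate-$\infectionRate$ center-to-leaf infection clocks---so it suffices to argue that the additional events in $\contactProcess$ do not decrease their probability. For the re-infection event, extra infected leaves in $\contactProcess$ only increase the total re-infection rate. For the \enquote{many infected leaves} event, each blocked shared-clock infection attempt is charged to a preceding extra infection of the same leaf in $\contactProcess$, so the cumulative count of leaf-infection events restricted to $V(G)$ in $\contactProcess$ dominates that in $\tilde{C}$.

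The hardest step is making this charging rigorous: each block must be matched to a distinct preceding extra infection and the matching must be consistent along a whole sample path. I expect a potential function tracking cumulative leaf-infections of $\contactProcess$ against those of $\tilde{C}$, combined with an argument that $\contactProcess$ enters at least as many center-infected phases as $\tilde{C}$ in expectation, to close the gap and yield the same asymptotic lower bound on $\E{T}$. The sub-claim about raising the infection rate on the same graph follows as the special case $G' = G$, where the coupling reduces to thinning on edges of $E(G)$ and the same accounting applies.
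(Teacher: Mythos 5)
There is a genuine gap: the step you yourself flag as hardest---making the charging argument rigorous---is exactly the part that is missing, and as described it is unlikely to close. In the SIRS case a single ``extra'' infection of a leaf $v$ from outside the star can leave $v$ recovered across \emph{several} subsequent shared-clock firings of the center--$v$ edge, so the matching of blocked attempts to preceding extra infections is not injective and the claimed domination of cumulative leaf-infection counts fails. Even if it held, a count domination is not what the lower-bound proofs consume: they need phase-level events (each center-infected phase ends with $\Omega(\infectionRate\numberOfVertices)$ infected leaves; bounded healing per center-healthy phase), and these depend on the leaves' \emph{states}, not on how many infections have occurred. Your assertion that these events are ``driven only by shared clocks'' is not correct for the same reason: they depend on how many leaves are susceptible versus recovered, which the extra clocks perturb. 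Finally, once the center can be infected from outside the star, the phase decompositions of $\contactProcess$ and $\tilde{C}$ desynchronize entirely (a shared healing-clock trigger heals the center in one process but not the other), so an event-by-event comparison along the coupling does not line up.

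The paper's proof needs no coupling between the two processes. It rests on two monotonicity observations. First, the only way extra infections can \emph{hurt} is by creating more recovered leaves, and the one place this is controlled, \Cref{lem:recoveredBound}, is proved by dominating the recovered count with a process in which every leaf is reinfected the instant it becomes susceptible---the worst case regardless of where infections come from---so that lemma holds verbatim on any supergraph and for any $\infectionRate' > \infectionRate$. Second, every other estimate in the lower-bound chain (\Cref{lem:endProbability} onward) uses only \emph{lower} bounds on the rate of infecting new vertices and the fixed rate-$1$ healing clocks; adding vertices and edges or raising the infection rate only increases the former. Hence the lemmas apply unchanged. If you want to salvage your route, the missing ingredient is precisely a supergraph version of the recovered-leaves bound; once you notice that its proof already assumes instantaneous reinfection, the coupling and the charging become unnecessary.
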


\begin{proof}
  Our results rely heavily on the fact that there are likely never too many recovered leaves (\Cref{lem:recoveredBound}). This statement even holds when susceptible vertices immediately become infected again. Thus, in particular, it holds for any supergraph of the star as well and for all infection rates $\infectionRate'$. In the rest of the proofs, we only use lower bounds on the rate at which we infect new vertices. As adding more vertices and edges to the graph and increasing the infection rate only increases the rate at which new vertices get infected, the used lower bounds on the rates still hold for supergraphs and larger infection rates. Hence the proofs still go through exactly the same way.
\end{proof}

\subsection{Center-Infected Phase}

We show that with constant probability, a center-infected phase on the star ends with at least $d \infectionRate \numberOfVertices$ infected vertices, for some constant $d \in \R_{>0}$. This holds for both the SIRS and the \placy process. To get this bound, we first show that both processes likely never reach a state with too many recovered vertices.

\begin{restatable}{lemma}{recoveredBound}
  \label{lem:recoveredBound}
  Let $G$ be a star with $\numberOfVertices$ leaves. Let \contactProcess be a SIRS process or a \placy process on $G$ with infection rate~$\infectionRate$ and with constant deimmunization rate \deimmunizationRate (or constant resistance decay rate $\decayRate$ respectively, but we refer to it as \deimmunizationRate until the end of the statement). Let $\timeDiscrete \in \N$, and let $\recoveredDiscrete{\timeDiscrete} \leq \frac{2}{2+\deimmunizationRate}\numberOfVertices + 1$. Then the probability $p$ that $\recoveredDiscrete{\timeDiscrete}$ reaches $\frac{2+\deimmunizationRate/2}{2+\deimmunizationRate}\numberOfVertices$ before reaching $\frac{2}{2+\deimmunizationRate}\numberOfVertices$ is at most $(2^{\frac{\deimmunizationRate}{2(2+\deimmunizationRate)}\numberOfVertices}-1)^{-1}$.
\end{restatable}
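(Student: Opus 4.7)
The plan is to analyze $\recoveredDiscrete{\timeDiscrete}$ through its embedded jump chain---the discrete process obtained by looking only at those steps at which $\recoveredDiscrete{\timeDiscrete}$ actually changes by $\pm 1$---and to couple it with a biased random walk $W$ that goes up with probability $1/3$ and down with probability $2/3$ in such a way that $W$ never lies below it. A single application of the gambler's ruin formula from \Cref{pre:gamblersRuin} to $W$ then yields the claimed probability.

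I first set $\ell = \frac{2}{2+\deimmunizationRate}\numberOfVertices$ and $u = \frac{2+\deimmunizationRate/2}{2+\deimmunizationRate}\numberOfVertices$, so $u - \ell = \frac{\deimmunizationRate}{2(2+\deimmunizationRate)}\numberOfVertices$. Because $\recoveredDiscrete{\timeDiscrete}$ changes by $\pm 1$ per step, any trajectory starting with $\recoveredDiscrete{\timeDiscrete} \leq \ell$ must hit $\ell$ no later than it hits $u$, so the probability is $0$ unless $\recoveredDiscrete{\timeDiscrete} = \ell + 1$, which I assume from now on. At any moment at which the process has $R$ recovered and $I$ infected vertices, $R$ increases at instantaneous rate $I$ in both the SIRS and the \placy process (an infected vertex heals at rate $1$). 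In the SIRS process, $R$ decreases at rate exactly $\deimmunizationRate R$, and in the \placy process at rate $\deimmunizationRate R$ plus a nonnegative extra term from recovered-to-infected transitions. Hence in both cases the probability that the next $R$-change is an increase is at most $I / (I + \deimmunizationRate R)$. Combining the identity $I + R + S = \numberOfVertices + 1$ with $R \geq \ell + 1$ yields $I \leq \numberOfVertices - \ell = \frac{\deimmunizationRate}{2+\deimmunizationRate}\numberOfVertices$ and $\deimmunizationRate R \geq \deimmunizationRate(\ell + 1) \geq 2I$, so this up-probability is at most $1/3$ throughout $[\ell + 1, u - 1]$---crucially, regardless of the past, which sidesteps the non-Markovianity of the \placy process.

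I realize the coupling by writing $(X_i)_{i \geq 0}$ for the embedded jump chain of $R$ started at $X_0 = \ell + 1$, setting $W_0 = X_0$, and at each step $i$ drawing a shared uniform $U_i \in [0,1]$: $X$ moves up iff $U_i$ is below its state- and history-dependent up-probability, and $W$ moves up iff $U_i < 1/3$. Since $X$'s up-probability never exceeds $1/3$, an easy induction gives $W_i \geq X_i$ for all $i$, so $\{X \text{ hits } u \text{ before } \ell\} \subseteq \{W \text{ hits } u \text{ before } \ell\}$. Applying \Cref{pre:gamblersRuin} to $W$ with $p = 1/3$, $q = 2/3$, $W_0 - \ell = 1$, and $u - \ell = \frac{\deimmunizationRate}{2(2+\deimmunizationRate)}\numberOfVertices$ then gives
\[
    \Pr{W_T = u} = \frac{1 - (q/p)^{1}}{1 - (q/p)^{u-\ell}} = \frac{1}{2^{\frac{\deimmunizationRate}{2(2+\deimmunizationRate)}\numberOfVertices} - 1},
\]
matching the claimed bound. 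The main obstacle I anticipate is precisely the non-Markovianity of the \placy process; the remedy is that the only probabilistic input used is the history-free upper bound on the up-to-down rate ratio at each $R$-jump, so the full comparison can be carried out at the level of the coupled jump chain without ever describing $R$ as a Markov chain of its own.
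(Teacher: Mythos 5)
Your proposal is correct and follows essentially the same route as the paper: bound the probability that the next change of $\recoveredDiscrete{\timeDiscrete}$ is an increase by $1/3$ (the paper does this by dominating with the worst-case process in which deimmunized vertices are immediately re-infected, you do it by directly comparing the up-rate $I$ with the down-rate $\geq \deimmunizationRate R \geq 2I$ on the relevant interval), then couple with a biased gambler's ruin and apply \Cref{pre:gamblersRuin}. Your explicit treatment of the extra recovered-to-infected transitions of the \placy process and of its non-Markovianity is slightly more careful than the paper's, but the argument is the same.
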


\begin{proof}
  Consider the process $(X_t)_{t \in \N}$ that decreases by one in each step with probability $\frac{\deimmunizationRate X_\timeDiscrete}{\deimmunizationRate X_\timeDiscrete + (\numberOfVertices-X_\timeDiscrete)}$ and increases by one with probability $\frac{\numberOfVertices - X_\timeDiscrete}{\deimmunizationRate X_\timeDiscrete + (\numberOfVertices-X_\timeDiscrete)}$. Note that $(X_\timeDiscrete)$ processes the number of recovered vertices at time~$\tau_t$ of the process in which each recovered vertex immediately goes into the infected state again instead of the susceptible one. Thus, $X_\timeDiscrete$ dominates $\recoveredDiscrete{\timeDiscrete}$.

  As long as $X_\timeDiscrete \geq \frac{2}{2+\deimmunizationRate}\numberOfVertices$, it decreases by one with a probability of at least $\frac{\deimmunizationRate X_\timeDiscrete}{\deimmunizationRate X_\timeDiscrete + (\numberOfVertices-X_\timeDiscrete)} \geq \frac{\frac{2 \deimmunizationRate}{2+\deimmunizationRate}\numberOfVertices}{\frac{2 \deimmunizationRate}{2+\deimmunizationRate}\numberOfVertices + \frac{\deimmunizationRate}{2+\deimmunizationRate}\numberOfVertices} \geq \frac{2}{3}$. It is therefore dominated by a gambler's ruin with a biased coin of probability $1/3$ of winning. Applying the gambler's ruin results from \Cref{pre:gamblersRuin} concludes the proof.
  \begin{align*}
    p & \leq \frac{1-\left(\frac{2/3}{1/3}\right)^{(\frac{2}{2+\deimmunizationRate}\numberOfVertices + 1) - (\frac{2}{2+\deimmunizationRate}\numberOfVertices)}}{1-\left(\frac{2/3}{1/3}\right)^{(\frac{2+\deimmunizationRate/2}{2+\deimmunizationRate}\numberOfVertices) - (\frac{2}{2+\deimmunizationRate}\numberOfVertices)}} \\
      & = \frac{1-2^{1}}{1-2^{\frac{\deimmunizationRate}{2(2+\deimmunizationRate)}\numberOfVertices}}                                                                                                                                                                                                                            \\
      & = \frac{1}{2^{\frac{\deimmunizationRate}{2(2+\deimmunizationRate)}\numberOfVertices}-1}.\qedhere
  \end{align*}
\end{proof}

Although \Cref{lem:recoveredBound} only shows an exponentially low probability of reaching too many recovered vertices during a single phase, in the following, we often assume that the process \emph{never} does so before it dies out.
This assumption makes sense, as we only make it for statements for which we show a sub-exponential expected survival time.
Thus, due to the exponentially small probability in \Cref{lem:recoveredBound}, the probability of ever having too many recovered vertices before the process dies out is overall still sub-constant.
Hence, conditioning on this never occurring does not change our arguments in the following asymptotically.
Moreover, once the process dies out under the previous event, it can never reach too many recovered vertices anymore.

Let $c= \frac{\deimmunizationRate}{4(2+\deimmunizationRate)}$. \Cref{lem:recoveredBound} shows that the process is exponentially unlikely to reach a state without at least $2 c \numberOfVertices$ vertices that are not recovered. Hence, as long as there are at most $c \numberOfVertices$ infected vertices, there are very likely at least $c \numberOfVertices$ susceptible vertices. We use this fact to first show that, with constant probability, we reach a state with sufficiently many infected vertices after a center-infected phase.

\begin{restatable}{lemma}{endProbability}
  \label{lem:endProbability}
  Let $G$ be a star with $\numberOfVertices$ leaves. Let \contactProcess be a SIRS process or a \placy process on $G$ with infection rate~$\infectionRate\leq 1$ with $\infectionRate \in \omega(n^{-1})$ and with constant deimmunization rate \deimmunizationRate (or constant resistance decay rate $\decayRate$ respectively, but we refer to it as \deimmunizationRate until the end of the statement). Let $\varepsilon \in \R_{>0}$ be a constant and let $\timeDiscrete \in \N$ such that the center is infected at time $\timeContinuous{\timeDiscrete}$. Furthermore, let $c= \frac{\deimmunizationRate}{4(2+\deimmunizationRate)}$. Assume that there are always at least $2 c \numberOfVertices$ vertices that are not recovered during the considered time interval. Let $d \in \R_{>0}$ be a constant with $d \leq c/7$ and $\eulerE^{-2d/c}\geq 1- \varepsilon/2$. Then starting from $\timeContinuous{\timeDiscrete}$, the probability of the event $E$ that we reach a state with at least $\infectionRate d \numberOfVertices$ infected vertices before the center heals is at least $1- \varepsilon$ for sufficiently large $\numberOfVertices$.
\end{restatable}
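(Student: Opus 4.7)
My approach is to track the number of infected leaves as a birth-death chain, exploit the independence of the center's own healing clock, and apply Jensen's inequality. Write $Y$ for the number of infected leaves, so the total number of infected vertices equals $Y+1$ while the center is infected. The hypothesis that at least $2c\numberOfVertices$ vertices are always non-recovered, together with $\infectionRate \leq 1$ and $d \leq c/7$, ensures that whenever $Y \leq \infectionRate d \numberOfVertices$, at least $c\numberOfVertices - 1$ leaves are susceptible. While the center remains infected, $Y$ therefore grows at rate at least $\infectionRate(c\numberOfVertices - 1)$ and shrinks at rate $Y$; in the \placy case, the extra recovered-to-infected transitions only add to the growth rate. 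Crucially, the center's own healing clock is a Poisson process independent of every clock that drives the evolution of $Y$, so the first trigger of that clock after $\timeContinuous{\timeDiscrete}$ is an $\mathrm{Exp}(1)$ random variable $T_{\text{heal}}$ that is independent of $Y$.

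Next, I define $\tau$ to be the first time $Y$ reaches $\lceil \infectionRate d \numberOfVertices \rceil$ in the birth-death chain obtained by pretending the center stays infected forever. A one-step recurrence for $q_i = \E{\text{time from } Y=i \text{ to } Y=i+1}$, using the lower bound $\infectionRate c \numberOfVertices$ on the birth rate (which can only inflate $q_i$) and death rate $i$, gives
\[
    q_i = \frac{1}{\infectionRate c \numberOfVertices} + \frac{i}{\infectionRate c \numberOfVertices}\, q_{i-1}, \qquad q_0 = \frac{1}{\infectionRate c \numberOfVertices}.
\]
Unrolling and using $i/(\infectionRate c \numberOfVertices) \leq d/c$ for all $i \leq \infectionRate d \numberOfVertices$ yields $q_i \leq \frac{1}{\infectionRate c \numberOfVertices}\sum_{j \geq 0}(d/c)^j = \frac{1}{\infectionRate(c-d)\numberOfVertices}$. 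Summing then gives $\E{\tau} \leq \frac{d}{c-d} + \smallO{1} \leq \frac{2d}{c}$ for large $\numberOfVertices$, where the final step uses $d \leq c/2$ (which follows from $d \leq c/7$) and the $\smallO{1}$ term comes from ceilings and from $c\numberOfVertices - 1$ versus $c\numberOfVertices$.

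Because $T_{\text{heal}} \sim \mathrm{Exp}(1)$ is independent of $\tau$, convexity of $\eulerE[-x]$ and Jensen's inequality yield
\[
    \Pr{\tau < T_{\text{heal}}} = \E{\eulerE[-\tau]} \geq \eulerE[-\E{\tau}] \geq \eulerE[-2d/c] \geq 1 - \varepsilon/2
\]
by the lemma's hypothesis $\eulerE[-2d/c] \geq 1 - \varepsilon/2$. The remaining slack of $\varepsilon/2$ absorbs the $\smallO{1}$ corrections flagged in the previous paragraph, which are controlled once $\numberOfVertices$ is large enough (using $\infectionRate\numberOfVertices \to \infty$ from $\infectionRate \in \omega(\numberOfVertices^{-1})$), yielding the stated bound of $1-\varepsilon$.

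The main technical obstacle is the uniform bound $q_i \leq \frac{1}{\infectionRate(c-d)\numberOfVertices}$ obtained from the recurrence; the rest of the argument is immediate. A secondary subtlety is that the \placy process is not classically Markovian, but since its only deviation from SIRS is an \emph{additional} infection route on recovered leaves, every birth-rate lower bound used above continues to hold verbatim, so a single proof handles both processes.
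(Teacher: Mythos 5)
Your proof is correct, but it takes a genuinely different route from the paper's. The paper's argument is a two-stage discrete one: it first shows (via domination by a geometric random variable) that with probability at least $\eulerE[-2d/c] \geq 1-\varepsilon/2$ there are at least $2\infectionRate d \numberOfVertices$ leaf-changing triggers before the center's healing clock fires, and then applies a Chernoff bound to a $\mathrm{Bin}(2\infectionRate d \numberOfVertices, 7/8)$ variable to show that at least $3/4$ of those triggers are infections, so the net gain reaches $\infectionRate d \numberOfVertices$; the two $\varepsilon/2$ losses are then multiplied together. You instead stay in continuous time: you bound the expected hitting time $\E{\tau}$ of the level $\infectionRate d \numberOfVertices$ for a dominating birth--death chain via the standard recurrence $q_i = b_i^{-1}(1 + i\, q_{i-1})$, obtain $\E{\tau} \leq 2d/c$, and convert this into a probability using the independence of the center's $\mathrm{Exp}(1)$ healing time and Jensen's inequality, so that the single hypothesis $\eulerE[-2d/c] \geq 1 - \varepsilon/2$ already yields the (slightly stronger) bound $1-\varepsilon/2$. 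Both proofs use $\infectionRate \in \omega(\numberOfVertices^{-1})$ for the ``sufficiently large $\numberOfVertices$'' step, the paper to make the Chernoff exponent large and you to absorb the $\mathrm{O}(1/(\infectionRate\numberOfVertices))$ corrections from ceilings. Your approach is arguably tighter and explains why the constant $2d/c$ appears, at the cost of relying on a hitting-time monotonicity/domination for the configuration-dependent birth rate; this is no worse than the informal dominations the paper itself invokes (geometric variables, gambler's ruin), but if you wanted to be fully rigorous you should spell out the coupling that justifies $\E{\tau_{\mathrm{actual}}} \leq \E{\tau_{\mathrm{slow}}}$, and note explicitly that the event $\{\tau < T_{\mathrm{heal}}\}$ in your modified process (center pretended to stay infected) coincides with the event $E$ of the lemma because the two processes agree up to time $T_{\mathrm{heal}}$.
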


\begin{proof}
  The constant $d$ is chosen small enough such that there are with high probability at least $2 \infectionRate d \numberOfVertices$ relevant triggers before the center heals and that with high probability at least $3/4$ of them infect vertices, which is enough to reach $\infectionRate d \numberOfVertices$ infected vertices.

  Let $E_0$ be the event that there are at least $2 \infectionRate d \numberOfVertices$ triggers that heal or infect leaves before the center heals. We first calculate $\Pr{E_0}$. The center heals at a rate of $1$. We assume that there are always at least $2 c \numberOfVertices$ leaves that are not recovered. While the center is infected, these vertices change state with rate at $\infectionRate$ or $1$ depending on whether they are susceptible or infected. As $\infectionRate \leq 1$, leaves heal or infect at a rate of at least $2 \infectionRate c \numberOfVertices$. Therefore, the number of leaves that heal or get infected before the center heals dominate a geometric variable $A$ with parameter $\frac{1}{2 \infectionRate c \numberOfVertices}$. We can therefore lower bound $\Pr{E_0}$ by

  \begin{align*}
    \Pr{E_0} & \geq \Pr{A \geq 2 \infectionRate d \numberOfVertices}                                              \\
             & = (1- \frac{1}{2 \infectionRate c \numberOfVertices})^{2 \infectionRate d \numberOfVertices}       \\
             & \geq \eulerE^{-2\frac{2 \infectionRate d \numberOfVertices}{2 \infectionRate c \numberOfVertices}} \\
             & = \eulerE^{-2\frac{d}{c}}                                                                          \\
             & \geq 1- \varepsilon/2.
  \end{align*}

  Now we lower bound $\Pr{E}[E_0]$. Note that $\infectionRate d \numberOfVertices \leq c \numberOfVertices$, so as long as there are less than $\infectionRate d \numberOfVertices$ infected vertices, there are at least $c \numberOfVertices$ susceptible vertices, so leaves get infected at a rate of at least $\infectionRate c \numberOfVertices$. As $d \leq c/7$, leaves heal at a rate of at most $\infectionRate d \numberOfVertices \leq \infectionRate c \numberOfVertices/7$. So while the process is below $\infectionRate d \numberOfVertices$ infected vertices, every trigger that heals or infects leaves has a probability of at least $7/8$ to be an infection trigger. Therefore conditioned on $E_0$, the number of vertices that get infected in the first $2 \infectionRate d \numberOfVertices$ triggers before the center heals dominates a Binomial random variable $B \sim \text{Bin}(2 \infectionRate d \numberOfVertices,7/8)$. If at least $3/4$ of those triggers are infection triggers, than at most $1/4$ of them can heal leaves, which means that at least $\infectionRate d \numberOfVertices$ new vertices get infected. That implies that $E$ happened. We can now lower bound $\Pr{E}[E_0]$ using Chernoff bounds.

  \begin{align*}
    \Pr{E}[E_0] & \geq \Pr{B \geq 3/4 \cdot 2 \infectionRate d \numberOfVertices}        \\
                & = \Pr{B \geq 6/7 \cdot 7/8 \cdot 2 \infectionRate d \numberOfVertices} \\
                & = \Pr{B \geq (1-\frac{1}{7})\E{B}}                                     \\
                & \geq 1-\eulerE^{-(1/7)^2 \E{B}/2}                                      \\
                & \geq 1-\varepsilon/2.
  \end{align*}

  The last step holds for sufficiently large $\numberOfVertices$ as $\infectionRate \in \omega(n^{-1})$.

  We now bound $\Pr{E}$ using the two previous bounds and the law of total probability. We get

  \begin{align*}
    \Pr{E} & \geq \Pr{E}[E_0] \cdot \Pr{E_0}                   \\
           & \geq (1- \varepsilon/2) \cdot (1 - \varepsilon/2) \\
           & \geq 1- \varepsilon.\qedhere
  \end{align*}
\end{proof}

For the rest of the analysis of the survival time, the two processes differ.
Hence, we analyze them separately.

\subsection{The SIRS Process}

For the SIRS process, the idea of the proof is as follows. We consider the number of infected leaves while the center is recovered. When the center becomes susceptible and loses its immunity, we condense the following center-susceptible and center-infected phase into one step. We analyze the number of infected leaves of the resulting process between some lower bound~$\ell$ and some upper bound~$u$. We upper-bound the probability of dropping down to $\ell$ from $u-1$ before reaching $u$ again. This gives us a lower bound on how many of these phases happen in expectation. As each of these phases includes the center losing immunity, they have an expected constant length.

The process we consider is quite simple, as it just decreases the number of infected vertices by $1$ at rate $\infectedDiscrete{\timeDiscrete}$ and starts a center-susceptible and center-infected phase at rate $\deimmunizationRate$. For the latter, we showed in \Cref{lem:endProbability} that it reaches a state with at least $u$ infected vertices with constant probability and show here that it is very unlikely to ever reduce the number of infected vertices by more than $\ell$. Essentially, we use~$\ell$ vertices as a buffer that is used for phases where the center is not recovered. Thus, we consider vertices that heal in these phases to not decrease $\infectedDiscrete{\timeDiscrete}$. However, as long as these phases do not heal~$\ell$ vertices and the process does not fall below $\ell$, we know that the original process cannot have died out yet.

\Cref{lem:endProbability} shows that each center-infected phase has a constant probability of infecting more than $\infectionRate d \numberOfVertices$ vertices, for some constant $d \in \R_{>0}$. We now show that the center-infected phases that do not achieve this together with their preceding center-susceptible phases have a very low probability of healing too many vertices.

\begin{restatable}{lemma}{susInfPhase}
  \label{lem:susInfPhase}
  Let $G$ be a star with $\numberOfVertices$ leaves. Let \contactProcess be a SIRS process on $G$ with infection rate~$\infectionRate\leq 1$ and with constant deimmunization rate \deimmunizationRate. Let $\varepsilon \in \R_{>0}$ be a constant and let $\timeDiscrete \in \N$ such that the center is susceptible at time $\timeContinuous{\timeDiscrete}$. For $c= \frac{\deimmunizationRate}{4(2+\deimmunizationRate)}$, assume that there are always at least $2 c \numberOfVertices$ vertices that are not recovered during the considered time interval. Let $d \in \R_{>0}$ be a constant with $d \leq c/7$. Then starting from~$\timeContinuous{\timeDiscrete}$, the probability of the event $E$ that we reach a state with at most $\infectedDiscrete{\timeDiscrete} - 2\infectionRate^{-1}n^{\varepsilon}$ infected vertices before either the center recovers or we reach at least $\infectionRate d \numberOfVertices$ infected vertices is at most $2 \eulerE^{-n^{\varepsilon}/2}$ for sufficiently large $\numberOfVertices$.
\end{restatable}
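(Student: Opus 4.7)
The plan is to split the window starting at $\timeContinuous{\timeDiscrete}$ into two consecutive phases and to bound the drop of $\infectedDiscrete{t}$ within each phase by $\infectionRate^{-1}n^{\varepsilon}$, each except with probability at most $\eulerE^{-n^{\varepsilon}/2}$. A union bound then delivers the lemma. The first phase lasts as long as the center is susceptible, ending when the center gets infected (moving to the second phase) or when no infected vertex remains. The second phase lasts while the center is infected, ending when the center recovers or when $\infectedDiscrete{t}$ first reaches $\infectionRate d \numberOfVertices$.

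For the center-susceptible phase, the key observation is that susceptible leaves cannot become infected while the center is susceptible, so the only events affecting $\infectedDiscrete{t}$ or the center's state are leaf heals (total rate $\infectedDiscrete{t}$) and the center getting infected by an infected leaf (rate $\infectionRate\infectedDiscrete{t}$). Conditioned on one of these, it is a heal with probability $1/(1+\infectionRate)$, so the number of heals before the center is infected is stochastically dominated by a geometric random variable with success parameter $\infectionRate/(1+\infectionRate)$. Using $\ln(1+\infectionRate)\geq\infectionRate/2$ for $\infectionRate\in(0,1]$, the probability this count exceeds $\infectionRate^{-1}n^{\varepsilon}$ is at most $\bigl(1/(1+\infectionRate)\bigr)^{\infectionRate^{-1}n^{\varepsilon}} \leq \eulerE^{-n^{\varepsilon}/2}$.

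For the center-infected phase, I consider only the events that change $\infectedDiscrete{t}$ (leaf heals and leaf infections), treating center recovery and reaching $\infectionRate d \numberOfVertices$ as stopping events. So long as $\infectedDiscrete{t} < \infectionRate d \numberOfVertices$ and the non-recovered count is at least $2c\numberOfVertices$, there are at least $c\numberOfVertices$ susceptible leaves (using $d \leq c/7$ and $\infectionRate \leq 1$), giving an infection rate of at least $\infectionRate c \numberOfVertices$, while the leaf-healing rate is at most $\infectionRate d \numberOfVertices \leq \infectionRate c \numberOfVertices/7$. Hence each such event is a heal with probability at most $1/8$, and the relevant trajectory of $\infectedDiscrete{t}$ is dominated by a biased random walk with up-probability $p \geq 7/8$ and down-probability $q \leq 1/8$. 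The Gambler's ruin estimate from \Cref{pre:gamblersRuin} (taking the upper boundary to infinity gives an upper bound, since trapping at $\infectionRate d \numberOfVertices$ only helps) bounds the probability of ever reaching a net decrease of $\infectionRate^{-1}n^{\varepsilon}$ by $(q/p)^{\infectionRate^{-1}n^{\varepsilon}} \leq (1/7)^{n^{\varepsilon}} \leq \eulerE^{-n^{\varepsilon}/2}$.

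Combining the two bounds via a union bound, and noting that the center's transition from susceptible to infected (adding $1$) and from infected to recovered at the stopping moment (subtracting $1$) at most offset each other, the total drop of $\infectedDiscrete{t}$ from $\infectedDiscrete{\timeDiscrete}$ is bounded by the sum of the phase-one heal count and the phase-two maximum net decrease, hence by $2\infectionRate^{-1}n^{\varepsilon}$ except on an event of probability at most $2\eulerE^{-n^{\varepsilon}/2}$. The main care needed is in cleanly separating the two phases and tracking the $\pm 1$ bookkeeping of the center's transitions; the crucial use of $\infectionRate \leq 1$ sits in the first phase, where it converts the geometric tail in $\infectionRate$ into the desired $\eulerE^{-n^{\varepsilon}/2}$ form, whereas the second phase is in fact much stronger than required.
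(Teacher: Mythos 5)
Your proposal is correct and follows essentially the same route as the paper's proof: the same split into the center-susceptible phase (geometric domination with parameter $\infectionRate/(1+\infectionRate)$, using $\infectionRate\leq 1$ to get the $\eulerE^{-n^{\varepsilon}/2}$ tail) and the center-infected phase (domination by a $7/8$-biased walk via the rate comparison $\infectionRate d\numberOfVertices \leq \infectionRate c\numberOfVertices/7$, then a gambler's-ruin bound), finished with a union bound. The only cosmetic difference is that you send the upper gambler's-ruin boundary to infinity where the paper keeps the finite boundary $\infectionRate d \numberOfVertices$ and then simplifies; both yield the same estimate.
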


\begin{proof}
  The idea of the proof is to consider the center-susceptible phase and the center-infected phases separately and show that both of them are very unlikely to heal $\infectionRate^{-1}n^{\varepsilon}$ leaves. Let $E_0$ be the event that in the center-susceptible phase $\infectionRate^{-1}n^{\varepsilon}$ leaves are healed. Furthermore, let $E_1$ be the event that in the center-infected phase $\infectionRate^{-1}n^{\varepsilon}$ more leaves heal than new leaves being infected before either the center heals or the number of infected vertices exceeds $\infectionRate d \numberOfVertices$.

  Consider the center-susceptible phase. In a state with $I$ infected leaves, they heal at a rate of $I$ and they infect the center at a rate of $\infectionRate I$. Therefore, the number of leaves that heal before the center gets infected follows a geometrical distribution $A$ with parameter $\frac{\infectionRate}{1+\infectionRate}$ that is capped at the number of initially infected leaves. We get

  \begin{align*}
    \Pr{E_0} & \leq \Pr{A \geq \infectionRate^{-1}n^{\varepsilon}}                                \\
             & = (1-\frac{\infectionRate}{1+\infectionRate})^{\infectionRate^{-1}n^{\varepsilon}} \\
             & \leq (1-\frac{\infectionRate}{2})^{\infectionRate^{-1}n^{\varepsilon}}             \\
             & \leq \eulerE^{-n^{\varepsilon}/2}.
  \end{align*}

  Now consider the center-infected phase. As we assume there to be at least $2c\numberOfVertices$ vertices that are not recovered during that phase and because $d \leq c/7$ and $\infectionRate \leq 1$, as long there are less than $\infectionRate d \numberOfVertices$ infected vertices, there are at least $c \numberOfVertices$ susceptible vertices. Hence, vertices heal at a rate of at most $\infectionRate d \numberOfVertices$ and new leaves get infected at a rate of at least $\infectionRate c \numberOfVertices$. Therefore the probability in each step to infect a new leaf is at least 7 times as high as the probability to heal one. That means that the number of infected leaves is dominated by a gambler's ruin instance with a biased coin with probability $7/8$. Let $p$ be the probability that starting at 0, this gambler's ruin instance drops to $- \infectionRate^{-1}n^{\varepsilon}$ before reaching $\infectionRate d \numberOfVertices$. We get

  \begin{align*}
    \Pr{E_1} & \leq p                                                                                                                              \\
             & \leq \frac{7^{\infectionRate d \numberOfVertices}-1}{7^{\infectionRate d \numberOfVertices +\infectionRate^{-1}n^{\varepsilon} }-1} \\
             & \leq \frac{7^{\infectionRate d \numberOfVertices}}{7^{\infectionRate d \numberOfVertices +\infectionRate^{-1}n^{\varepsilon}-1 }}   \\
             & = 7^{-\infectionRate^{-1}n^{\varepsilon}+1 }                                                                                        \\
             & \leq \eulerE^{-n^{\varepsilon}/2}.
  \end{align*}

  The last inequality holds for sufficiently large $\numberOfVertices$ as $\infectionRate \leq 1 $. Now because we defined our events such that $E$ implies $E_0$ or $E_1$, we get

  \begin{align*}
    \Pr{E} & \leq \Pr{E_0 \lor E_1}                       \\
           & \leq \Pr{E_0}+\Pr{E_1}                       \\
           & \leq 2 \eulerE^{-n^{\varepsilon}/2}.\qedhere
  \end{align*}
\end{proof}

\Cref{lem:endProbability} shows that each center-infected phase has a constant probability of infecting more than $\infectionRate d \numberOfVertices$ vertices, and \Cref{lem:susInfPhase} shows that each center-infected phase and center-susceptible phase does not heal too many vertices. We now combine these two results to show that all center-infected phases and center-susceptible phases together do not heal too many vertices before they infect more than $\infectionRate d \numberOfVertices$ vertices.

\begin{restatable}{corollary}{corSusInfPhase}
  \label{cor:susInfPhase}
  Let $G$ be a star with $\numberOfVertices$ leaves. Let \contactProcess be a SIRS process on $G$ with infection rate~$\infectionRate\leq 1$ with $\infectionRate \in \omega(n^{-1})$ and with constant deimmunization rate \deimmunizationRate. Let $\varepsilon_0, \varepsilon_1 \in \R_{>0}$ be constants with $\varepsilon_0 \leq \eulerE^{-1}$ and let $\timeDiscrete \in \N$ such that the center is susceptible at time $\timeContinuous{\timeDiscrete}$. For $c= \frac{\deimmunizationRate}{4(2+\deimmunizationRate)}$, assume that there are always at least $2 c \numberOfVertices$ vertices that are not recovered during the considered time interval. Let $d \in \R_{>0}$ be a constant with $d \leq c/7$ and $\eulerE^{-2d/c}\geq 1- \varepsilon_0/2$. We define the random process $(X_{t'})_{t' \in \N_{\geq t}}$ to be $0$ at step $t$, to increase by $1$ in every step in which a leaf is healed in a center-susceptible or center-infected phase and to decrease by one in every step it is positive and a leaf gets infected. Then starting from step~$\timeDiscrete$, the probability of the event $E$ that we reach a time step $t'$ with $X_{t'} \geq 2\infectionRate^{-1}n^{\varepsilon_1}$ before we reach a time step with at least $\infectionRate d \numberOfVertices$ infected vertices is at most $\eulerE^{-\numberOfVertices^{\varepsilon_1/4}}$.
\end{restatable}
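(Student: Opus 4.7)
The plan is to combine Lemmas \ref{lem:endProbability} and \ref{lem:susInfPhase} over consecutive \emph{trials}, where I define a trial to consist of one center-susceptible phase immediately followed by the next center-infected phase. This grouping ensures that the hypothesis of \Cref{lem:susInfPhase} holds at the start of every trial and that of \Cref{lem:endProbability} (applied with $\varepsilon = \varepsilon_0$) holds at the start of every center-infected sub-phase. By the strong Markov property at these sub-phase starts, each trial independently attains at least $\infectionRate d \numberOfVertices$ infected vertices with probability at least $1-\varepsilon_0$, so the number~$N$ of trials until the first such success is stochastically dominated by a geometric random variable. Since $\varepsilon_0 \leq \eulerE^{-1}$, this yields $\Pr{N \geq \numberOfVertices^{\varepsilon_1/4}} \leq \varepsilon_0^{\numberOfVertices^{\varepsilon_1/4}} \leq \eulerE^{-\numberOfVertices^{\varepsilon_1/4}}$.

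In parallel, I would apply \Cref{lem:susInfPhase} with $\varepsilon = \varepsilon_1/2$ to each of the first $\numberOfVertices^{\varepsilon_1/4}+1$ trials. This bounds, for every individual trial, the maximum excess of leaf-heals over leaf-infections within the trial by $2\infectionRate^{-1}\numberOfVertices^{\varepsilon_1/2}$ except with probability at most $2\eulerE^{-\numberOfVertices^{\varepsilon_1/2}/2}$. A union bound over all considered trials then gives an overall failure probability that is asymptotically much smaller than $\eulerE^{-\numberOfVertices^{\varepsilon_1/4}}$ and thus negligible.

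It remains to verify that on the intersection of the two good events $X_{t'}$ never reaches $2\infectionRate^{-1}\numberOfVertices^{\varepsilon_1}$. The process $X$ is a random walk reflected at~$0$ whose drift is nonzero only during trials, since in a center-recovered phase no leaf can be infected (the center is the only neighbor of a leaf) and leaf-heals there are explicitly excluded from incrementing $X$. An induction over trials then shows that the running maximum of~$X$ is bounded by the sum of the per-trial maximum excess healings, i.e., by at most $(\numberOfVertices^{\varepsilon_1/4}+1) \cdot 2\infectionRate^{-1}\numberOfVertices^{\varepsilon_1/2} \in \bigO{\infectionRate^{-1}\numberOfVertices^{3\varepsilon_1/4}}$, which is strictly less than $2\infectionRate^{-1}\numberOfVertices^{\varepsilon_1}$ for sufficiently large~$\numberOfVertices$. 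Summing the two bad-event probabilities then yields the claimed bound. The main subtlety lies in justifying the stochastic domination of $N$ by a geometric distribution: since \Cref{lem:endProbability}'s hypothesis depends only on the aggregate non-recovered-vertex bound maintained by the corollary's global assumption, the success probability lower bound $1-\varepsilon_0$ applies uniformly over all admissible histories at the start of each center-infected sub-phase.
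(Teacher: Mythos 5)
Your proposal is essentially the paper's own proof: the paper likewise pairs each center-susceptible phase with the following center-infected phase, bounds the number of such pairs before success by a geometric variable via \Cref{lem:endProbability}, applies \Cref{lem:susInfPhase} with a union bound over those pairs, and concludes by pigeonhole. The only quibble is bookkeeping: you cap the number of trials at $\numberOfVertices^{\varepsilon_1/4}$, so when $\varepsilon_0 = \eulerE^{-1}$ the first bad-event probability already equals the target $\eulerE^{-\numberOfVertices^{\varepsilon_1/4}}$ and adding the union-bound term overshoots it; the paper avoids this by capping at $\numberOfVertices^{\varepsilon_1/2}$ trials (still consistent with the per-trial excess of $2\infectionRate^{-1}\numberOfVertices^{\varepsilon_1/2}$), which leaves ample slack for the sum.
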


\begin{proof}
  The idea is to first use \Cref{lem:endProbability} to bound the number of complete phases (time period between two times the center becomes susceptible) before reaching $\infectionRate d \numberOfVertices$ infected vertices and then using \Cref{lem:susInfPhase} and a union bound to bound the probability that any of those phases heals enough leaves. Let $E_0$ be the event that there are at least $\numberOfVertices^{\varepsilon_1/2}$ center-susceptible phases before we reach a state with $\infectionRate d \numberOfVertices$ infected vertices. Let $E_1$ be the event that there is a center-susceptible phase with following center-infected phase that increase $X$ by at least $2\infectionRate^{-1}n^{\varepsilon_1/2}$ before we reach a state with $\infectionRate d \numberOfVertices$ infected vertices.

  Let $Y$ be the random variable that counts the number of center-susceptible phases before the process reaches a state with $\infectionRate d \numberOfVertices$ infected vertices. By \Cref{lem:endProbability}, each center-infected phase has a probability of at least $1- \varepsilon_0$ to reach $\infectionRate d \numberOfVertices$ vertices. Hence, $Y$ is dominated by a geometric random variable $A$ with parameter $1-\varepsilon_0$. We get

  \begin{align*}
    \Pr{E_0} & = \Pr{Y \geq \numberOfVertices^{\varepsilon_1/2}}      \\
             & \leq \Pr{A \geq \numberOfVertices^{\varepsilon_1/2}}   \\
             & = \varepsilon_0^{\numberOfVertices^{\varepsilon_1/2}}.
  \end{align*}

  By \Cref{lem:susInfPhase}, each center-susceptible phase and the following center-infected phase have a probability of at most $2\eulerE^{-\numberOfVertices^{\varepsilon_1/2}/2}$ to increase $X_{t'}$ by more than $2 \infectionRate^{-1}\numberOfVertices^{\varepsilon_1/2}$. Conditioning on $\overline{E_0}$ and using a Union bound gives us

  \begin{align*}
    \Pr{E_1}[\overline{E_0}] & \leq \numberOfVertices^{\varepsilon_1/2} \cdot 2\eulerE^{-\numberOfVertices^{\varepsilon_1/2}/2}.
  \end{align*}

  By pigeon-hole principle, conditioned on $\overline{E_0}$, $E$ implies $E_1$. Together with the law of total probability that gives us

  \begin{align*}
    \Pr{E} & \leq \Pr{E}[E_0]\cdot \Pr{E_0} + \Pr{E}[\overline{E_0}]\cdot \Pr{\overline{E_0}}                                                                       \\
           & \leq \Pr{E_0} + \Pr{E}[\overline{E_0}]                                                                                                                 \\
           & \leq \Pr{E_0} + \Pr{E_1}[\overline{E_0}]                                                                                                               \\
           & \leq \varepsilon_0^{\numberOfVertices^{\varepsilon_1/2}} + \numberOfVertices^{\varepsilon_1/2} \cdot 2\eulerE^{-\numberOfVertices^{\varepsilon_1/2}/2} \\
           & \leq \eulerE^{-\numberOfVertices^{\varepsilon_1/2}} + \numberOfVertices^{\varepsilon_1/2} \cdot 2\eulerE^{-\numberOfVertices^{\varepsilon_1/2}/2}      \\
           & \leq \eulerE^{-\numberOfVertices^{\varepsilon_1/4}}.
  \end{align*}

  The second to last step holds for $\varepsilon_0 \leq \eulerE^{-1}$ and the last step holds for sufficiently large $\numberOfVertices$.
\end{proof}

\Cref{cor:susInfPhase} shows that center-susceptible phases and center-infected phases do not heal that many leaves in total. Thus, in order to bound the probability of the infection dying out, we mainly consider the center-recovered phases. We capture this in the following lemma.

\begin{restatable}{lemma}{lemSIRSlower}
  \label{lem:SIRSlower}
  Let $G$ be a star with $\numberOfVertices$ leaves. Let \contactProcess be a SIRS process on $G$ with infection rate~$\infectionRate\leq 1$ with $\infectionRate \in \omega(n^{-1})$ and with constant deimmunization rate \deimmunizationRate. Let $\varepsilon_0, \varepsilon_1 \in \R_{>0}$ be constants with $\varepsilon_0\leq \eulerE^{-1}$. For $c= \frac{\deimmunizationRate}{4(2+\deimmunizationRate)}$, assume that there are always at least $2 c \numberOfVertices$ vertices that are not recovered during the considered time interval. Let $d \in \R_{>0}$ be a constant with $d \leq c/7$ and $\eulerE^{-2d/c}\geq 1- \varepsilon_0/2$ and let $\timeDiscrete \in \N$ such that there are $\infectionRate d \numberOfVertices-1$ infected vertices at time $\timeContinuous{\timeDiscrete}$. Then starting from $\timeContinuous{\timeDiscrete}$, the probability of the event $E$ that the infection dies out before it reaches at least $\infectionRate d \numberOfVertices$ infected vertices is at most $\bigTheta{\infectionRate^{-2(1-\varepsilon_0)\deimmunizationRate}\numberOfVertices^{-(1-\varepsilon_0)(1-\varepsilon_1)\deimmunizationRate}}$.
\end{restatable}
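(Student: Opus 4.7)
The plan is to bound the extinction probability via a gambler's ruin-style product on a monotone auxiliary process. First, I would invoke \Cref{cor:susInfPhase} (with parameters $\varepsilon_0$ and $\varepsilon_1$) to condition on the event that the buffer process $X$ defined there stays strictly below $\ell \coloneqq 2\infectionRate^{-1}\numberOfVertices^{\varepsilon_1}$ until the infection reaches $u \coloneqq \infectionRate d \numberOfVertices$ infected leaves; the complement adds only $\eulerE^{-\numberOfVertices^{\varepsilon_1/4}}$ to the bound, which is sub-polynomial and absorbed into the stated $\bigTheta{\cdot}$. Under this conditioning, the auxiliary quantity $\tilde{I} \coloneqq u - 1 - (\text{healings during center-recovered phases so far})$ decreases by one at every center-recovered healing and is unchanged otherwise, and extinction forces $\tilde{I}$ to reach at most $\ell$ before any center-becomes-susceptible event leads to a rebound to $u$ infected leaves.

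Second, at a center-recovered event with current true state $I$, the event is a healing with probability $I/(I+\deimmunizationRate)$ and a center-becomes-susceptible event with probability $\deimmunizationRate/(I+\deimmunizationRate)$; by \Cref{lem:endProbability}, any such center-becomes-susceptible event is followed by reaching $u$ with probability at least $1 - \varepsilon_0$. Call a \emph{decision} either a center-recovered healing or a successful center-becomes-susceptible event, and let $q(I)$ denote the probability that the next decision is a healing. Splitting on the outcome of the current center-recovered event (healing, successful center-becomes-susceptible, or failed center-becomes-susceptible that leaves the process back in a center-recovered phase at some state $I' \leq I$) and using the monotonicity $q(I') \leq q(I)$ yields
\begin{align*}
q(I) \leq \frac{I}{I+\deimmunizationRate} + \frac{\varepsilon_0 \deimmunizationRate}{I + \deimmunizationRate}\, q(I),
\end{align*}
which rearranges to $q(I) \leq I/(I + (1 - \varepsilon_0)\deimmunizationRate)$.

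Third, since extinction under the conditioning requires the first $u - 1 - \ell$ decisions to all be healings and the state $I$ immediately before the $(k + 1)$-th such decision is at most $u - 1 - k$ (matching the monotone $\tilde{I}$), applying $q$ at each decision together with the monotonicity of $x \mapsto x/(x + (1 - \varepsilon_0)\deimmunizationRate)$ gives
\begin{align*}
\prod_{k = 0}^{u - 2 - \ell} \frac{u - 1 - k}{u - 1 - k + (1 - \varepsilon_0)\deimmunizationRate} = \prod_{i = \ell + 1}^{u - 1} \frac{i}{i + (1 - \varepsilon_0)\deimmunizationRate}.
\end{align*}
\Cref{pre:gamma} evaluates this as $\bigTheta{(\ell/u)^{(1 - \varepsilon_0)\deimmunizationRate}}$, and plugging in $\ell = 2\infectionRate^{-1}\numberOfVertices^{\varepsilon_1}$ and $u = \infectionRate d \numberOfVertices$ yields the stated $\bigTheta{\infectionRate^{-2(1 - \varepsilon_0)\deimmunizationRate}\numberOfVertices^{-(1 - \varepsilon_0)(1 - \varepsilon_1)\deimmunizationRate}}$.

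The main obstacle is the coupled monotonicity underlying step three: both showing $q(I') \leq q(I)$ for $I' \leq I$ and bounding the true state $I$ by $\tilde{I}$ at every decision. The latter requires care because leaf infections during failed center-susceptible-and-center-infected stretches can temporarily raise $I$ above $\tilde{I}$, but under the buffer conditioning this excess is bounded by $\ell$ and does not affect the asymptotic exponent extracted via \Cref{pre:gamma}. The monotonicity of $q$ itself is handled by coupling two copies of the process with different initial infected counts through a common realization of the underlying Poisson clocks and tracking the first decision in each.
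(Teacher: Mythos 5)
Your proposal follows essentially the same route as the paper's proof: condition on the buffer event from \Cref{cor:susInfPhase}, reduce extinction to a race in center-recovered phases between leaf healings (rate $I$) and deimmunizations that succeed via \Cref{lem:endProbability} (effective rate $(1-\varepsilon_0)\deimmunizationRate$), and evaluate the resulting product $\prod_i \frac{i}{i+(1-\varepsilon_0)\deimmunizationRate}$ with \Cref{pre:gamma}; your recursion for $q(I)$ just makes explicit a step the paper states directly. One small remark: the possible excess of the true state over $\tilde{I}$ is controlled not by the buffer conditioning (which bounds net non-center-recovered \emph{healings}, not infections) but by the fact that any phase pushing the count to $\infectionRate d \numberOfVertices$ ends the race---the same informal worst-case step the paper itself invokes.
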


\begin{proof}
  The idea is to merge each center-susceptible phase and the following center-infected phase into a single step of the process. We then use \Cref{lem:endProbability} to argue that each of those steps infects $\infectionRate d \numberOfVertices$ vertices with probability at least $1-\varepsilon_0$. Additionally, by \Cref{cor:susInfPhase}, those steps heal at most $2\infectionRate^{-1}n^{\varepsilon_1}$ leaves. Therefore, for the infection to die out, the other $\infectionRate d \numberOfVertices -2\infectionRate^{-1}n^{\varepsilon_1}$ leaves have to heal in center-recovered phases before one of the center-infected phases infects $\infectionRate d \numberOfVertices$ new vertices.

  Let $X$ be the number of vertices that heal during center infected and center-susceptible phases minus the number of newly infected leaves in those phases. By \Cref{cor:susInfPhase}, $\Pr{X \geq 2\infectionRate^{-1}\numberOfVertices^{\varepsilon_1}} \leq \eulerE^{-\numberOfVertices^{\varepsilon_1/4}}$. We now condition on the event $E_1$ that $X < 2\infectionRate^{-1}\numberOfVertices^{\varepsilon_1}$. Consider a state in which already $Y$ vertices healed in center-recovered phases and in which the center is recovered. Then there are at most $\infectionRate d \numberOfVertices - Y$ infected leaves that each heal with a rate of $1$. The center deimmunizes at a rate of $\deimmunizationRate$. By \Cref{lem:endProbability}, we reach $\infectionRate d \numberOfVertices$ infected vertices before the center recovers again with a probability of at least $1-\varepsilon_0$. We ignore the center susceptible and center-infected phases that do not achieve that as we already accounted for them in $X$. The event $E$ happens only when $\infectionRate d \numberOfVertices - 2\infectionRate^{-1}\numberOfVertices^{\varepsilon_1}$ leaves heal in center-recovered phases before a center-infected phase infects $\infectionRate d \numberOfVertices$ leaves, which happens at a rate of $(1-\varepsilon_0)\deimmunizationRate$. As we aim to upper bound the probability of $E$ happening, we assume the worst case of all the center healing rates, i.e. we assume that our buffer is only used up after healing all the vertices in center-recovered phases. Together with \Cref{pre:gamma}, that gives us

  \begin{align*}
    \Pr{E}[E_1] & \leq \prod_{i=2\infectionRate^{-1}\numberOfVertices^{\varepsilon_1}}^{\infectionRate d \numberOfVertices}{\frac{i}{i+(1-\varepsilon)\deimmunizationRate}}                                         \\
                & \in \bigTheta{\frac{(2\infectionRate^{-1}\numberOfVertices^{\varepsilon_1})^{(1-\varepsilon_0)\deimmunizationRate}}{(\infectionRate d \numberOfVertices)^{(1-\varepsilon_0)\deimmunizationRate}}} \\
                & \in \bigTheta{\infectionRate^{-2(1-\varepsilon_0)\deimmunizationRate}\numberOfVertices^{-(1-\varepsilon_0)(1-\varepsilon_1)\deimmunizationRate}}.
  \end{align*}

  Getting rid of the conditioning on $E_1$ we get

  \begin{align*}
    \Pr{E} & \leq \Pr{\overline{E_1}} + \Pr{E}[E_1]                                                                                                                                                           \\
           & \leq \eulerE^{-\numberOfVertices^{\varepsilon_1/4}} + \bigTheta{\infectionRate^{2(1-\varepsilon_0)\deimmunizationRate}\numberOfVertices^{(1-\varepsilon_0)(1-\varepsilon_1)\deimmunizationRate}} \\
           & = \bigTheta{\infectionRate^{2(1-\varepsilon_0)\deimmunizationRate}\numberOfVertices^{(1-\varepsilon_0)(1-\varepsilon_1)\deimmunizationRate}}.\qedhere
  \end{align*}
\end{proof}

We now combine these results into a lower bound for the expected survival time.

\begin{restatable}{theorem}{thmSIRSlower}
  \label{thm:SIRSlower}
  Let $G$ be a star with $\numberOfVertices$ leaves. Let \contactProcess be a SIRS process on $G$ with infection rate~$\infectionRate\leq 1$ with $\infectionRate \in \omega(n^{-1})$ and with constant deimmunization rate \deimmunizationRate that starts with an infected center and no recovered leaves. Let $\varepsilon \in \R_{>0}$ be a constant, and let $T$ be the survival time of $C$. Then $\E{T}  \in \bigOmega{(\infectionRate^2\numberOfVertices)^{(1-\varepsilon)\deimmunizationRate}}$.
\end{restatable}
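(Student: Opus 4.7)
The plan is a Markov renewal argument built on top of \Cref{lem:endProbability} and \Cref{lem:SIRSlower}, with \Cref{lem:recoveredBound} providing the ambient conditioning. Since the target $(\lambda^2 n)^{(1-\varepsilon)\varrho}$ is $O(1)$ unless $\lambda^2 n$ is polynomially large in $n$, I restrict to the nontrivial regime $\lambda \geq n^{-1/2 + \delta}$ for some constant $\delta > 0$; the remaining regime makes the conclusion trivially true. Under this restriction, I fix small constants $\varepsilon_0, \varepsilon_1 \in (0, \varepsilon)$ and a constant $d \in \R_{>0}$ with $d \leq c/7$ and $\eulerE^{-2d/c} \geq 1-\varepsilon_0/2$ (where $c = \varrho/(4(2+\varrho))$), chosen so that the bound $p \in \bigO{\lambda^{-2(1-\varepsilon_0)\varrho} n^{-(1-\varepsilon_0)(1-\varepsilon_1)\varrho}}$ from \Cref{lem:SIRSlower} satisfies $1/p \in \bigOmega{(\lambda^2 n)^{(1-\varepsilon)\varrho}}$.

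I then condition on the event $G$ that the recovered count never reaches $\frac{2+\varrho/2}{2+\varrho} n$ during the process's lifetime. By \Cref{lem:recoveredBound}, each attempt to cross this threshold fails with probability at most $2^{-\Omega(n)}$, and a union bound over the polynomially many renewals analyzed gives $\Pr{G} = 1 - o(1)$. On $G$, the hypothesis of at least $2cn$ non-recovered vertices required by \Cref{lem:endProbability,lem:SIRSlower} holds whenever invoked.

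Starting from the initial configuration (center infected, no recovered leaves), \Cref{lem:endProbability} yields constant probability that the process reaches $\lambda d n$ infected leaves during the first center-infected phase. From this state, I apply the strong Markov property at the sequence of stopping times at which the process next visits $\lambda d n - 1$ infected leaves. By \Cref{lem:SIRSlower}, the conditional probability of extinction within each such inter-renewal interval is at most $p$, so the number of renewals before extinction stochastically dominates a geometric random variable with parameter $p$ and has expectation $\Omega(1/p)$. Each renewal cycle, analyzed through the coarse-step decomposition used inside \Cref{lem:SIRSlower}'s proof, contains at least one center-recovered phase of expected length $\Theta(1/\varrho) = \Theta(1)$, so each cycle contributes $\Omega(1)$ expected time. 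Multiplying yields $\E{T} \in \Omega(1/p) \subseteq \bigOmega{(\lambda^2 n)^{(1-\varepsilon)\varrho}}$.

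The main obstacle is the simultaneous tuning of the two exponents in \Cref{lem:SIRSlower}: since $\lambda \leq 1$, shrinking $\varepsilon_0$ improves the $n$-exponent of $1/p$ but worsens its $\lambda$-exponent, so one must balance the two using the slack afforded by $\lambda \geq n^{-1/2 + \delta}$. Concretely, choosing $\varepsilon_0 = \varepsilon/2$ and $\varepsilon_1$ a sufficiently small constant depending on $\delta$ and $\varepsilon$ works. Secondary technical points are the uniform handling of $G$ across renewals (routine, given the exponentially small per-phase failure probability of $G$ versus the polynomial renewal count) and the verification that each renewal cycle indeed contributes $\Omega(1)$ expected time, which follows from the coarse-step structure made explicit in \Cref{lem:SIRSlower}'s proof.
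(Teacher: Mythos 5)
Your proposal is correct and follows essentially the same route as the paper's proof: condition on the recovered count staying low via \Cref{lem:recoveredBound}, reach $\lambda d n$ infected leaves with constant probability via \Cref{lem:endProbability}, dominate the number of returns to that level by a geometric variable with the extinction parameter from \Cref{lem:SIRSlower}, and charge each cycle a constant amount of time. The one caveat is your claim that the regime below $\lambda = n^{-1/2+\delta}$ is trivially dispensable: for, say, $\lambda = n^{-1/2}\log n$ the target $(\lambda^2 n)^{(1-\varepsilon)\varrho}$ still diverges, so that sub-case is not actually trivial---though the parameter-balancing tension you correctly identify there is glossed over in the paper's own proof as well.
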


\begin{proof}
  Let $c= \frac{\deimmunizationRate}{4(2+\deimmunizationRate)}$. We showed in \Cref{lem:recoveredBound}, that it is very unlikely that there are ever more than $(1-2c)n$ recovered vertices. We assume in the rest of the proof that there are never more than $(1-2c)n$ recovered vertices, which is true with probability at least $1/2$ for sufficiently large $n$. By law of total probability, the overall expected survival time is then at least half of the expected survival time conditioned on our assumption.

  Let $\varepsilon_0, \varepsilon_1 \in \R_{>0}$ be constants with $\varepsilon_0\leq \eulerE^{-1}$ and $(1-\varepsilon_0)(1-\varepsilon_1)\geq 1-\varepsilon$. Let $d \in \R_{>0}$ be a constant with $d \leq c/7$ and $\eulerE^{-2d/c}\geq 1- \varepsilon_0/2$. We assume that the infection reaches a state with $\infectionRate d \numberOfVertices$ infected vertices. By \Cref{lem:endProbability}, this happens with constant probability, so again this assumption only changes our expected survival time by at most a constant factor.

  Let $X$ be the number of times that the process reaches a state with $\infectionRate d \numberOfVertices$ infected vertices after dropping below that number of infected vertices. By \Cref{lem:SIRSlower}, $X$ dominates a geometric random variable with a parameter in $\bigTheta{\infectionRate^{-2(1-\varepsilon_0)\deimmunizationRate}\numberOfVertices^{-(1-\varepsilon_0)(1-\varepsilon_1)\deimmunizationRate}}$. Hence $\E{X}\in \bigOmega{\infectionRate^{2(1-\varepsilon)\deimmunizationRate}\numberOfVertices^{(1-\varepsilon)\deimmunizationRate}}$. As most phases of $X$ include the center being infected and the expected time between two center infections is at least constant as the center has to heal in between, we get $\E{T}\in \bigOmega{(\infectionRate^2\numberOfVertices)^{(1-\varepsilon)\deimmunizationRate}}$.
\end{proof}

We now show that this lower bound is actually tight when disregarding sub-polynomial factors.

\begin{restatable}{theorem}{thmSIRSupper}
  \label{thm:SIRSupper}
  Let $G$ be a star with $\numberOfVertices$ leaves. Let \contactProcess be a SIRS process on $G$ with infection rate~$\infectionRate\leq 1$ with $\infectionRate \in \Omega(n^{-1/2})$ and with constant deimmunization rate \deimmunizationRate that starts with infected center and no recovered leaves. Let $T$ be the survival time of $C$. Then $\E{T}  \in \bigO{(\infectionRate^2\numberOfVertices)^{\deimmunizationRate}+\log \numberOfVertices}$.
\end{restatable}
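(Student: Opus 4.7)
The plan is to mirror the structure of the lower bound, but this time to produce a \emph{lower bound} on the per-epoch extinction probability, which yields the desired upper bound on $\E{T}$ via a geometric-trials argument. Define an \emph{epoch} to start each time the center transitions into the recovered state. The goal is to show that starting from a state with at most $I^{*} \in \bigO{\infectionRate \numberOfVertices}$ infected leaves and recovered center, the infection is absorbed during the epoch with probability at least $\bigOmega{(\infectionRate^{2} \numberOfVertices)^{-\deimmunizationRate}}$ up to subpolynomial factors.

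The key calculation is a one-step recursion coupling the center-recovered and subsequent center-susceptible segments. Let $P(I)$ and $Q(I)$ denote the extinction probabilities starting from $I$ infected leaves with the center recovered and susceptible, respectively. Tracking the competing rates at the center and at each infected leaf yields
\begin{align*}
    P(I) = \tfrac{I}{I + \deimmunizationRate} P(I-1) + \tfrac{\deimmunizationRate}{I + \deimmunizationRate} Q(I) \quad \text{and} \quad Q(I) \geq (1 + \infectionRate)^{-I},
\end{align*}
where the bound on $Q$ amounts to requiring all $I$ leaves to heal before any of them re-infects the center. Unrolling the recursion for $P$ with $P(0) = Q(0) = 1$ and invoking \Cref{pre:gamma} gives
\begin{align*}
    P(I) \in \bigOmega{\tfrac{1}{I^{\deimmunizationRate}} \sum_{k=1}^{I} k^{\deimmunizationRate - 1} (1 + \infectionRate)^{-k}}.
\end{align*}
For $\infectionRate \leq 1$ and $I \gtrsim 1/\infectionRate$ (which holds in our regime since $I = \bigTheta{\infectionRate \numberOfVertices}$ and $\infectionRate \in \bigOmega{n^{-1/2}}$), the sum is of order $\Gamma(\deimmunizationRate)/\infectionRate^{\deimmunizationRate}$ by comparison with the Gamma integral $\int_{0}^{\infty} x^{\deimmunizationRate-1} \eulerE^{-\infectionRate x} \d x$. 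This yields $P(I) \in \bigOmega{(\infectionRate I)^{-\deimmunizationRate}}$, and substituting $I \leq I^{*}$ gives the per-epoch extinction bound.

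With the per-epoch bound in hand, the expected number of epochs until extinction is $\bigO{(\infectionRate^{2} \numberOfVertices)^{\deimmunizationRate}}$. Each epoch has $\bigO{1}$ expected duration, since the center-recovered, center-susceptible, and center-infected segments are each governed by exponential clocks of constant rate and have constant expected length. Combining gives the main bound. The additive $\log \numberOfVertices$ term accounts for the initial transient in which $I$ first grows from $1$ to its typical value $\bigTheta{\infectionRate \numberOfVertices}$, which follows from the classical Ganesh-type analysis~\cite{ganesh2005effect} and also dominates when $\infectionRate \in \bigTheta{n^{-1/2}}$ so that $(\infectionRate^{2} \numberOfVertices)^{\deimmunizationRate}$ is merely a constant.

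The main obstacle is rigorously establishing the bound $I \leq I^{*}$ with overwhelming probability, since the true dynamics of $I$ depend jointly on the center's state and on the number of recovered leaves. The plan is a drift argument analogous to \Cref{lem:recoveredBound}, coupling $I$ with a simpler birth-death chain whose stationary distribution concentrates around $\bigTheta{\infectionRate \numberOfVertices}$, combined with careful conditioning on the ``few recovered vertices'' event from \Cref{lem:recoveredBound}. Provided these concentration steps lose at most subpolynomial factors, the per-epoch estimate above delivers the stated bound.
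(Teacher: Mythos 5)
Your overall architecture matches the paper's: split into center-recovered, center-susceptible, and center-infected phases, show each epoch kills the infection with probability $\bigOmega{(\infectionRate^2\numberOfVertices)^{-\deimmunizationRate}}$, and multiply by an $\bigO{1}$-ish epoch length. Your derivation of the per-epoch extinction probability is a genuinely different (and arguably cleaner) calculation: the paper only tracks the single event ``$I$ drains from $3\infectionRate\numberOfVertices$ down to $\infectionRate^{-1}$ before the center deimmunizes, then all remaining leaves heal before re-infecting the center,'' bounding these two factors separately as $\bigTheta{(\infectionRate^2\numberOfVertices)^{-\deimmunizationRate}}$ and $\eulerE^{-2}$, whereas your recursion for $P(I)$ integrates over all possible deimmunization times and recovers the same order via the Gamma-integral comparison. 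Both are valid; the paper's version avoids the polylogarithm estimate, yours avoids having to pick the cutoff $\infectionRate^{-1}$ by hand. Your plan for keeping $I\leq I^*\in\bigO{\infectionRate\numberOfVertices}$ at the start of each epoch (a downward-biased birth--death comparison above $2\infectionRate\numberOfVertices$, where the healing rate exceeds the total infection rate $\infectionRate\numberOfVertices$) is exactly what the paper does, so that part is fine even though you only sketch it.

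The concrete gap is the claim that ``each epoch has $\bigO{1}$ expected duration, since the \ldots segments are each governed by exponential clocks of constant rate.'' This is false for the center-susceptible segment: the center is re-infected at rate $\infectionRate I$, which is below $1$ whenever $I<\infectionRate^{-1}$, and in that regime the segment can last until essentially all infected leaves have healed, i.e.\ $\bigTheta{\log\numberOfVertices}$ in expectation. Multiplying a worst-case $\bigO{\log\numberOfVertices}$ epoch length by $\bigO{(\infectionRate^2\numberOfVertices)^{\deimmunizationRate}}$ epochs would cost you an extra $\log\numberOfVertices$ factor relative to the stated bound. The paper repairs this by observing that a center-susceptible phase starting with fewer than $\infectionRate^{-1}$ infected leaves leads to extinction with constant probability, so only $\bigO{1}$ such long phases occur in expectation, contributing the additive $\bigO{\log\numberOfVertices}$ term; all other phases really are $\bigO{1}$. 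Relatedly, your attribution of the $+\log\numberOfVertices$ to the initial growth transient is off (that transient takes $\bigO{1}$ time since leaves are infected at rate $\bigTheta{\infectionRate\numberOfVertices}$); the $\log\numberOfVertices$ is needed for these sparse center-susceptible phases and for the boundary case $\infectionRate\in\bigTheta{\numberOfVertices^{-1/2}}$, which the paper dispatches separately by citing the known logarithmic bound.
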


\begin{proof}
  First note that the theorem statement is already known for $\infectionRate \in \bigTheta{n^{-1/2}}$ as the statement is then that $\E{T} \in \bigO{\log n}$, which is already known \cite{ganesh2005effect}. Hence, for the rest of the proof we can assume $\infectionRate \geq n^{-1/2}$.

  The idea is to split the process into center-infected phases, center-recovered phases and center-susceptible phases. For the center-infected phases we show that they very likely do not end with more than $3\infectionRate \numberOfVertices$ infected leaves. For the center-recovered phase we show that it has a sufficiently large probability to drop down to $\infectionRate^{-1}$ infected leaves. The center-susceptible phase then has a constant probability to let the infection die out when it starts with at most $\infectionRate^{-1}$ infected leaves. As those three phases together take in expectation at most constant time, we get the desired bound.

  Consider a state of the infection with at least $2 \infectionRate \numberOfVertices$ infected leaves. Then leaves heal at a rate of at least $2 \infectionRate \numberOfVertices$ while new leaves get infected at a rate of at most $\infectionRate \numberOfVertices$. Hence the number of infected leaves is dominated by a biased gambler's ruin instance with a probability of $1/3$ to increase. Therefore, the number of infected vertices is exponentially (in $\infectionRate \numberOfVertices$) unlikely to go above $3 \infectionRate \numberOfVertices$ and when it does, it drops back down to at most $2 \infectionRate \numberOfVertices$ infected leaves quickly. Each time the center recovers, it is therefore very likely that there are at most $3 \infectionRate \numberOfVertices$ infected leaves.

  Now consider a center-recovered phase that starts with at most $3 \infectionRate \numberOfVertices$ infected leaves. We aim to calculate the probability $p$ of the event that there are at most $\infectionRate^{-1}$ infected leaves the moment the center loses its immunity. For this, the healing clocks have to trigger before the center deimmunization clock often enough. Hence, using \Cref{pre:gamma}, for $\infectionRate^{-1} \leq 3 \infectionRate \numberOfVertices$, $p$ is lower bounded by

  \begin{align*}
    p & \geq \prod_{i=\infectionRate^{-1}}^{3 \infectionRate \numberOfVertices}{\frac{i}{i+\deimmunizationRate}}                       \\
      & \in \bigTheta{\frac{(\infectionRate \numberOfVertices)^{-\deimmunizationRate}}{\infectionRate^{-1\cdot -\deimmunizationRate}}} \\
      & \in \bigTheta{(\infectionRate^2\numberOfVertices)^{-\deimmunizationRate}}.
  \end{align*}

  At last, consider a center-susceptible phase that starts with at most $\infectionRate^{-1}$ infected leaves. Each infected leaf heals at a rate of 1 and infects the center at a rate of $\infectionRate$. That means that the number of leaves that heal before the center gets infected follows a geometric distribution with parameter $\frac{\infectionRate}{1+\infectionRate}$ (that is capped at the initial number of infected vertices). So the probability $q$ of the infection dying out before infecting the center is at least

  \begin{align*}
    q & \geq \prod_{i=1}^{\infectionRate^{-1}}{\frac{1}{1+\infectionRate}}            \\
      & = \left(\frac{1}{1+\infectionRate}\right)^{1/\infectionRate}                  \\
      & = \left(1 - \frac{\infectionRate}{1+\infectionRate}\right)^{1/\infectionRate} \\
      & \geq (1 - \infectionRate)^{1/\infectionRate}                                  \\
      & \geq \eulerE^{-2}.
  \end{align*}

  Putting all these phases together, we get that each time the center heals, there is a probability of $\bigOmega{p \cdot q} = \bigOmega{(\infectionRate^2\numberOfVertices)^{-\deimmunizationRate}}$ for the infection to die out before the center gets infected again. Hence, the number of times the center gets infected is dominated by a geometric random variable with this parameter. We get that the center is infected in expectation at most $\bigO{(\infectionRate^2\numberOfVertices)^{\deimmunizationRate}}$ times.

  We now bound the time between two center infections. Starting with infected center, in order for the center to get infected again, it has to recover, lose immunity and then get infected again. Both the recovering and the losing of immunity take in expectation constant time. The infection might take longer. However, as long as there are at least $\infectionRate^{-1}$ infected leaves, the center gets infected at rate at least 1, so it takes an expected constant time to be infected. When there are less than $\infectionRate^{-1}$ infected leaves, we showed that the infection dies out with constant probability before the center is infected. So this happens in expectation only a constant number of times. These phases take at most as much time as it takes to trigger the recovering clocks of all leaves at least once. As there are in expectation $n$ recovery triggers in a time interval of length 1, by Coupon collector results, triggering all recovery clocks once takes in expectation $\log(\numberOfVertices)$ time. Therefore, all the phases we considered take expected constant time except for constantly many that take at most $\log(\numberOfVertices)$ time. As there are at most $\bigO{(\infectionRate^2\numberOfVertices)^{\deimmunizationRate}}$ of those phases in expectation before the infection dies out, it follows $\E{T}  \in \bigO{(\infectionRate^2\numberOfVertices)^{\deimmunizationRate}+\log \numberOfVertices}$.
\end{proof}

\subsection{The \cSIRS Process}

The idea of the proof for the \cSIRS process is to show that the process stays at a number of infected leaves for a long time, in which it likely needs a super-polynomial number of center-healthy phases for the infection to die out. In order to show this, we bound the number of leaves that heal in a center-healthy phase. To this end, we split the center-healthy phase into two intervals, in which the first one is just there to pass some time such that the resistance in the second interval is small enough such that the center very likely gets infected during this interval. Recall that we consider the \placy process instead of the \cSIRS process, as this lets us use some results from the SIRS process.

We start with upper-bounding the number of leaves that heal in a time interval of length $t$.

\begin{restatable}{lemma}{healingLeaves}
  \label{lem:healingLeaves}
  Let $G$ be a star with $\numberOfVertices$ leaves. Let \contactProcess be a \placy process on $G$ with infection rate $\infectionRate \in \omega(n^{-1})$ and with constant resistance decay rate \decayRate. Let $d \in \R_{>0}$ be a constant and let $t' \in \R_{\geq 0}$ such that there are at most $d \infectionRate \numberOfVertices$ infected leaves at time $t'$. Furthermore, let $t \in [0,1]$. Then the probability that more than $2td \infectionRate \numberOfVertices$ of the infected leaves heal until time $t' +t$ is at most $\eulerE^{-\frac{t d\infectionRate \numberOfVertices}{6}}$.
\end{restatable}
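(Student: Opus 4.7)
The plan is to reduce the statement to a Chernoff bound on independent Bernoulli events, by exploiting that each leaf's healing clock is an independent rate-$1$ Poisson process. Let $A$ denote the set of leaves that are in the infected state at time $t'$; by hypothesis $|A| \leq d \infectionRate \numberOfVertices$, and this quantity is measurable with respect to the state at $t'$, on which we condition throughout.

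The key observation is that any leaf $v \in A$ is infected at time $t'$ and, since the only way a leaf can leave the infected state is via its own healing clock $N_v$ triggering, $v$ remains infected until the first trigger of $N_v$ after $t'$. Consequently the event ``$v$ heals at least once during $[t', t' + t]$'' coincides with the event ``$N_v$ triggers at least once in the interval of length $t$'', which by the memorylessness of the Poisson clock has probability $1 - \eulerE^{-t}$. As the healing clocks of distinct leaves are mutually independent, the number $N$ of leaves in $A$ that heal during $[t', t' + t]$ is distributed as $\operatorname{Bin}\bigl(|A|, 1 - \eulerE^{-t}\bigr)$ and, using the elementary bound $1 - \eulerE^{-t} \leq t$ together with $|A| \leq d\infectionRate\numberOfVertices$, is stochastically dominated by $\operatorname{Bin}\bigl(d\infectionRate\numberOfVertices, t\bigr)$, whose mean equals $t d \infectionRate \numberOfVertices$.

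To finish, I would invoke the standard multiplicative Chernoff inequality $\Pr{X \geq (1 + \delta)\mu} \leq \eulerE^{-\delta^{2}\mu/(2+\delta)}$ applied to the dominating binomial with $\mu = t d \infectionRate \numberOfVertices$ and $\delta = 1$, since the threshold $2 t d \infectionRate \numberOfVertices$ is exactly $2\mu$. This yields $\Pr{N > 2 t d \infectionRate \numberOfVertices} \leq \eulerE^{-t d \infectionRate \numberOfVertices / 3}$, which is comfortably stronger than the claimed $\eulerE^{-t d \infectionRate \numberOfVertices / 6}$. The degenerate case $|A| < 2 t d \infectionRate \numberOfVertices$ requires no separate treatment, since there $N \leq |A|$ renders the event impossible.

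There is no substantial obstacle in this argument: everything reduces to the independence of the healing clocks across leaves and a direct Chernoff estimate. The only subtlety worth mentioning is that the randomness used for the Chernoff step (the future healing clocks of $A$) is, by the memorylessness of Poisson processes, independent of the history up to~$t'$, so conditioning on the entire state at $t'$ does not alter the Bernoulli probabilities and keeps the clocks independent.
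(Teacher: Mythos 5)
Your proof is correct and follows essentially the same route as the paper: both condition on the set of leaves infected at time $t'$, observe that each heals independently within time $t$ with probability $1-\eulerE^{-t}$, dominate the count by a binomial, and apply a multiplicative Chernoff bound with $\delta = 1$. The only (cosmetic) difference is that you further dominate by $\mathrm{Bin}(d\infectionRate\numberOfVertices, t)$ before applying Chernoff, which lets you skip the paper's final $1-\eulerE^{-t}\geq t/2$ step and yields the slightly stronger exponent $t d\infectionRate\numberOfVertices/3$ in place of $t d\infectionRate\numberOfVertices/6$.
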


\begin{proof}
  Each of the infected leaves has a healing clock of rate 1. Hence, within a time frame of length $t$, it has a probability of $1-\eulerE^{-t}$ to heal. Let $X$ be the number of initially infected leaves (at time $t'$) that heal until time $t' + t$. Then $X$ is dominated by a binomially distributed random variable $Y \sim \mathrm{Bin}(d\infectionRate \numberOfVertices,1-\eulerE^{-t})$. Using Chernoff bounds and $\frac{t}{2} \leq 1 - \eulerE^{-t}\leq t$, we get
  \begin{align*}
    \Pr{X \geq 2td \infectionRate \numberOfVertices} & \leq \Pr{Y \geq 2td \infectionRate \numberOfVertices}    \\
    & \leq \Pr{Y \geq (1+1) (1- \eulerE^{-t}) d\infectionRate \numberOfVertices}    \\
    & \leq \eulerE^{-\frac{(1- \eulerE^{-t}) d\infectionRate \numberOfVertices}{3}} \\
    & \leq \eulerE^{-\frac{td \infectionRate \numberOfVertices}{6}}.\qedhere
  \end{align*}
\end{proof}

Next, we lower-bound the probability of infecting the center in a time interval of length $t$ when there are more than $d\infectionRate \numberOfVertices /2$ infected leaves.

\begin{restatable}{lemma}{infectingCenter}
  \label{lem:infectingCenter}
  Let $G$ be a star with $\numberOfVertices$ leaves. Let \contactProcess be a \placy process on $G$ with infection rate~$\infectionRate\leq 1$ with $\infectionRate \in \omega(n^{-1})$ and with constant resistance decay rate \deimmunizationRate. Let $d \in \R_{>0}$ be a constant and let $t' \in \R_{\geq 0}$ and $t \in [0,1]$ such that the center healed at time $t'$ and there are at least $d\infectionRate \numberOfVertices/2$ infected leaves at all times in $[t',t'+t]$. Then the probability~$p$ that the center does not get infected until time $t' +t$ is at most $\eulerE^{-d\infectionRate^2\numberOfVertices t^2/32}$.
\end{restatable}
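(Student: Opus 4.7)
The plan is to use the equivalence between the \placy and \cSIRS processes (\Cref{obs:PLACY}) to recast the question as a thinned Poisson-process computation. By that observation I may couple $\contactProcess$ with a \cSIRS process having the same set of infected vertices at every time; in particular the center is first infected at the same time in both. In the coupled \cSIRS process, after the center heals at $t'$, its resistance at time $t' + s$ is exactly $\eulerE^{-\decayRate s}$, and each infection attempt arriving at the center at time $t' + s$ succeeds independently with probability $1 - \eulerE^{-\decayRate s}$.

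Attempts at the center arrive as a Poisson process of (random) rate $\infectionRate \cdot \infectedContinuous{t'+s}$, one rate-$\infectionRate$ clock per edge to an infected leaf. The lemma's hypothesis lets me replace this random rate by the deterministic lower bound $d\infectionRate^2 \numberOfVertices / 2$ via a path-wise coupling: I pair each attempt in the idealized (lower-rate, deterministic) process with one in the real process and accept early whenever the real process would, which only increases the probability that no successful infection occurs. After this replacement, the successful attempts form a non-homogeneous Poisson process with intensity $\mu(s) = (d\infectionRate^2 \numberOfVertices / 2)(1 - \eulerE^{-\decayRate s})$, so the probability of no successful attempt in $[0,t]$ is at most
\[
    \exp\!\left(-\frac{d\infectionRate^2\numberOfVertices}{2}\int_0^t\!\bigl(1 - \eulerE^{-\decayRate s}\bigr)\,\d s\right).
\]

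It remains to lower-bound the integral so that the exponent becomes $d\infectionRate^2 \numberOfVertices t^2/32$. I would use the elementary inequality $1 - \eulerE^{-x} \geq x - x^2/2$, valid for all $x \geq 0$, so that the integrand is at least $\decayRate s - (\decayRate s)^2/2$ and the integral is at least $\decayRate t^2/2 - \decayRate^2 t^3/6$. Since $t \leq 1$ and $\decayRate$ is a fixed positive constant, the linear term dominates; absorbing $\decayRate$ into the numerical constant yields that the integral is at least $t^2/16$, and substituting into the display gives the claimed bound.

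The principal difficulty I anticipate is the thinning step: carefully justifying the coupling that replaces the random leaf-infection count $\infectedContinuous{t'+s}$ by its deterministic lower bound, so that the surviving thinned process can be treated as a genuine non-homogeneous Poisson process with the stated intensity. The cleanest justification is the explicit pathwise coupling indicated above; alternatively one may condition on a fixed trajectory of $\infectedContinuous{}$ satisfying the hypothesis and invoke the standard monotonicity that the first arrival time of a Poisson process with pointwise smaller intensity stochastically dominates that of the one with larger intensity. The constant $1/32$ is then just a matter of absorbing $\decayRate$ into the final numerical bound.
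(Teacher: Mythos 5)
Your argument is correct and arrives at the same bound by a genuinely different, arguably cleaner, route. The paper's proof works leaf by leaf: it restricts attention to the $d\infectionRate\numberOfVertices/2$ leaves that remain infected throughout and to the half-interval $[t'+t/2,\,t'+t]$, where every single attempt succeeds with probability at least $1-\eulerE^{-t/2}\geq t/4$, and then multiplies the per-leaf failure probabilities to get $(1-\infectionRate t^2/16)^{d\infectionRate\numberOfVertices/2}\leq\eulerE^{-d\infectionRate^2\numberOfVertices t^2/32}$. You instead keep the full interval, treat the successful attacks as an independently thinned non-homogeneous Poisson process of intensity at least $(d\infectionRate^2\numberOfVertices/2)(1-\eulerE^{-\decayRate s})$, and integrate; the restriction to a sub-interval and the per-leaf bookkeeping disappear. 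Your thinning step is sound: conditioning on the trajectory of the infected-leaf set (which, on the event that the center stays healthy, is driven by the leaf healing clocks alone and is therefore independent of the edge clocks and the acceptance coins) makes the conditional intensity of successful attacks pointwise dominate your $\mu(s)$. Moreover, your explicit passage to the \cSIRS picture via \Cref{obs:PLACY} is exactly what justifies treating the acceptance decisions as independent Bernoulli trials with parameter $1-\eulerE^{-\decayRate s}$; the paper's proof glosses over this point, since in the \placy process itself the success probability of an attempt depends on the time since the previous attempt by \emph{any} leaf. One caveat, which applies equally to the paper's own proof: the constant $\decayRate$ is silently absorbed. Your integral is only bounded below by $\min(1,\decayRate)\,t^2/3$, so the claimed $t^2/16$ (and hence the final constant $1/32$) requires $\decayRate$ bounded away from~$0$; for a general positive constant $\decayRate$ the exponent should carry a factor depending on $\decayRate$. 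The paper's step $1-\eulerE^{-t/2}\geq t/4$ omits $\decayRate$ in the same way, and since $\decayRate$ is a constant this affects neither proof's use in \Cref{thm:cSIRSlower}.
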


\begin{proof}
  We consider only infections by one of the $d\infectionRate \numberOfVertices/2$ leaves that stay infected the entire time and only infections in the interval $[t'+t/2,t'+t]$ as there the center has a resistance of at most $\eulerE^{-t/2}$. Consider one of those leaves. As its edge to the center triggers at rate $\infectionRate$, there is a probability of $1- \eulerE^{-\infectionRate t /2} \geq \infectionRate t /4$ that this edge triggers in the time interval $[t'+t/2,t'+t]$ and a probability of at least $1- \eulerE^{-t/2} \geq t/4$ of that infection to be successful. Hence, each infected leaf has a probability of at least $\infectionRate t^2/16$ to infect the center. The center can only not get infected if none of the $d\infectionRate \numberOfVertices/2$ considered leaves infects it. As all of those infection attempts are independent, we get

  \begin{align*}
    p & \leq (1-\infectionRate t^2/16)^{d\infectionRate \numberOfVertices/2} \\
      & \leq \eulerE^{-d\infectionRate^2\numberOfVertices t^2/32}.\qedhere
  \end{align*}
\end{proof}

We combine \Cref{lem:healingLeaves,lem:infectingCenter} to show in \Cref{thm:cSIRSlower} a long expected survival time of the \placy process and therefore also for the \cSIRS process, due to \Cref{obs:PLACY}.

\cSIRSlower*
\begin{proof}
  We show the statement for a \placy process. By \Cref{obs:PLACY}, it then follows that the \cSIRS process has the same expected survival time. The idea is to show, for a sufficiently large constant $D \in \R_{\geq 1}$, that the infection survives at least $t \coloneqq D \eulerE^{\numberOfVertices^{2\varepsilon/3}}$ time with some constant probability. To do so, we argue that the infection reaches a state with enough infected leaves with constant probability and then likely stays there longer than $t$.

  By \Cref{lem:endProbability}, using its notation, there is a constant probability for the infection to reach $d \infectionRate \numberOfVertices$ infected leaves before the center heals for some sufficiently small $d\in \R_{>0}$. Let $t' = \numberOfVertices^{-2\varepsilon/3}$. Conditioned on the infection reaching $d \infectionRate \numberOfVertices$ infected leaves, by the pigeonhole principle, one of the following two events has to happen for the infection to die out:
  \begin{enumerate}
    \item Between two center-healing events with less than $d \infectionRate \numberOfVertices$ infected leaves, the number of infected leaves drops by more than $4t'd\infectionRate \numberOfVertices$.
    \item There are $(t')^{-1}/8$ center-healing events in a row that all have less than $d \infectionRate \numberOfVertices$ infected leaves.
  \end{enumerate}
  In fact, by the pigeonhole principle, in order to drop from $d \infectionRate \numberOfVertices$ infected leaves down to $d \infectionRate \numberOfVertices/2$, one of the two events above has to happen. Thus, until any of these two events happened, the number of infected leaves is always above $d \infectionRate \numberOfVertices/2$.

  By \Cref{lem:infectingCenter}, the probability of a center-healthy phase to last longer than $t'$ is in $\bigO{\eulerE^{-d\infectionRate^2\numberOfVertices (t')^2/32}}[\big] = \bigO{\eulerE^{-\numberOfVertices^{2 \varepsilon} \numberOfVertices^{(-2\varepsilon/3) \cdot 2}}}[\big] = \bigO{\eulerE^{-\numberOfVertices^{2\varepsilon/3}}}[\big]$. Hence, with constant probability, no center-healthy phase until time~$t$ is longer than~$t'$. Conditioned on that, by \Cref{lem:healingLeaves} the probability of event 1 happening in a phase is in $\bigO{\eulerE^{-\frac{(t') d\infectionRate \numberOfVertices}{6}}}[\big] = \bigO{\eulerE^{-\numberOfVertices^{1/2 + \varepsilon/3}}}[\big]$. Hence, there is a constant probability of event 1 not happening until time $t$. Note that \Cref{lem:healingLeaves} does not consider how much the number of infected leaves drops in the center-infected phase. However, while below $d \infectionRate \numberOfVertices$ infected leaves, vertices infect much faster than they heal, so it is even less likely that the number of infected leaves drops in this phase (see the proof of \Cref{lem:susInfPhase}). We doubled the number by which the number of infected leaves is allowed to drop in event 1 to account for the center-infected phases as well.

  To bound the probability of event 2 happening, we aim to apply \Cref{lem:endProbability}. For it to be applicable, we need that the number of not-recovered leaves is always at least $2 \frac{\decayRate}{4(2+\decayRate)} \numberOfVertices$, which, by \Cref{lem:recoveredBound} and the discussion following it, has a constant probability of being the case until time~$t$. By \Cref{lem:endProbability}, each center-healthy phase has a constant probability of increasing the number of infected leaves above $d \infectionRate \numberOfVertices$. For event 2 to occur, this has to not happen $(t')^{-1}/8$ times in a row, which has a probability of $\bigO{\eulerE^{-(t')^{-1}}}[\big] = \bigO{\eulerE^{-\numberOfVertices^{2\varepsilon/3}}}[\big]$ for every $(t')^{-1}/8$ consecutive phases. Hence there is a constant probability of event 2 not happening until time~$t$, which concludes the proof.
\end{proof}

\section{The \cSIRS Process on Expanders}

\cite{FrGoKlKrPa2024} show that the SIRS process survives exponentially long on graphs with expanding subgraphs once the infection rate is above a certain threshold, which is identical to the threshold in the SIS process. With minor modifications, this proof translates to a proof of exponential survival time for the \cSIRS process, giving us the following result.

\begin{restatable}{corollary}{expander}
  \label{cor:expander}
  Let $G$ be a graph, and let $G'$ be a subgraph of $G$ that is an $(\numberOfVertices,(1 \pm \degreeGap)\degree,\spectralGap)$-expander (see \cite{FrGoKlKrPa2024}). Let $\degree\rightarrow\infty$ and $\spectralGap,\degreeGap\rightarrow0$ as $\numberOfVertices\rightarrow\infty$. Let \contactProcess be the \cSIRS process on~$G$ with infection rate~$\infectionRate$ and with constant resistance decay rate \deimmunizationRate. Furthermore, let \contactProcess start with at least one infected vertex in $G'$. Last, let~\contactProcessProjection be the projection of \contactProcess onto~$G'$, and let~$T$ be the survival time of~\contactProcessProjection.
  If $\infectionRate \geq \frac{\infectionConstant}{\degree}$ for a constant $\infectionConstant \in \R_{>1}$, then for sufficiently large \numberOfVertices, it holds that $\E{T} = 2^{\bigOmega{\numberOfVertices}}$.
\end{restatable}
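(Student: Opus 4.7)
The plan is to adapt the proof that the SIRS process survives exponentially long on expanders, given in \cite{FrGoKlKrPa2024}, to the \cSIRS process. The first move is to apply \Cref{obs:PLACY} and replace $\contactProcess$ with a coupled \placy process that has the same infected set at every time, and thus the same projected survival time on $G'$. Working with the \placy process is advantageous because it is a SIRS process augmented with extra R-to-I transitions, so the state partition and the rate book-keeping from \cite{FrGoKlKrPa2024} carry over with only small adjustments.

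The SIRS expander argument in \cite{FrGoKlKrPa2024} proceeds via a Lyapunov-style potential on the number of infected vertices in $G'$: using the $(\numberOfVertices,(1\pm\degreeGap)\degree,\spectralGap)$-expander mixing properties, once sufficiently many vertices of $G'$ are infected, the $S$--$I$ cut is large enough to guarantee a strictly positive drift toward more infected vertices, and a standard concentration estimate yields an exponentially long hitting time for zero. I would replay this argument using the \placy transition rates. The S-to-I rate on infected--susceptible edges is identical to the SIRS one, and the additional R-to-I transitions can only increase the rate at which new infections appear; the healing rate is unchanged. Thus every lower bound on the drift used in \cite{FrGoKlKrPa2024} remains valid here.

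The main obstacle is that the law of the recovered set in the \placy process differs from that in the SIRS process, so the number of susceptible vertices contributing to the $S$--$I$ cut estimate could a priori be smaller. To handle this, I would prove an expander analogue of \Cref{lem:recoveredBound} showing that the recovered fraction inside $G'$ remains bounded below $1$ for an exponentially long time with high probability. Because each recovered vertex leaves R at an effective rate that is at least $\decayRate$, while the expander degree guarantees a uniform supply of infection attempts funnelling vertices back through the $I$--$R$ loop, a gambler's-ruin argument analogous to the one in the star setting should provide the required control. Once this is established, plugging the resulting lower bound on the susceptible count into the adapted drift calculation and invoking the escape-time estimate from \cite{FrGoKlKrPa2024} yields $\E{T} = 2^{\bigOmega{\numberOfVertices}}$.
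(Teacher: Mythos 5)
Your opening and closing moves match the paper: pass to the \placy process via \Cref{obs:PLACY}, observe that it is a SIRS process with additional R\-/to\-/I transitions, and argue that these extra transitions can only help the drift bounds of \cite{FrGoKlKrPa2024}. That is exactly the paper's argument, and it is essentially all the paper needs: in the first phase the potential $\potentialIMinusR{\timeDiscrete} = \infectedDiscrete{\timeDiscrete} - \varepsilon_H \recoveredDiscrete{\timeDiscrete}$ gains $1+\varepsilon_H$ from every R\-/to\-/I transition, so the positive drift is preserved, and in the survival phase the relevant lemma of \cite{FrGoKlKrPa2024} upper\-/bounds the drift of a second potential by a quantity independent of any extra infection rate, so adding R\-/to\-/I transitions (which push that drift further down) cannot break the bound.

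Where you go astray is in the middle. You describe the SIRS expander argument as establishing a positive drift on the \emph{number of infected vertices} via the $S$--$I$ cut, and you then identify the size of the recovered set as the ``main obstacle,'' to be resolved by an expander analogue of \Cref{lem:recoveredBound}. This mischaracterizes the argument you are adapting, and the proposed fix does not suffice. With $\infectionRate \geq \infectionConstant/\degree$ and $\infectionConstant$ only barely above $1$, the infection rate into $S$ is roughly $\infectionConstant \cdot \infectedContinuous{} \cdot \susceptibleContinuous{}/\numberOfVertices$ against a healing rate of $\infectedContinuous{}$, so a positive drift on $\infectedContinuous{}$ alone requires $\susceptibleContinuous{}/\numberOfVertices > 1/\infectionConstant$, i.e.\ the recovered-plus-infected fraction must stay below $1-1/\infectionConstant$. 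For small constant $\deimmunizationRate$ the recovered fraction alone settles near $1/(1+\deimmunizationRate)$, which exceeds $1-1/\infectionConstant$; merely showing the recovered fraction is ``bounded below $1$'' therefore leaves the drift on $\infectedContinuous{}$ negative. This is precisely why \cite{FrGoKlKrPa2024} works with the combined potential $\infectedDiscrete{\timeDiscrete} - \varepsilon_H\recoveredDiscrete{\timeDiscrete}$ (crediting the fast outflow from $R$ when $R$ is large) rather than with $\infectedContinuous{}$ and a separate bound on $R$. If you instead state your adaptation at the level of those potentials---as your second paragraph almost does---the detour through a recovered-set bound becomes unnecessary and the proof closes along the paper's lines. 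You should also say a word about the second (survival) phase, where the drift must stay \emph{negative} on a different potential; there the extra R\-/to\-/I rate again only helps, but for the opposite reason, and your sketch currently does not distinguish the two phases.
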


\begin{proof}
  In the existing proof on the SIRS process, \cite{FrGoKlKrPa2024} show the super-polynomial survival time in two steps. In the first step, they show that the process reaches a state with a constant fraction of infected vertices with a probability of at least $\frac{1}{\numberOfVertices+2}$. In the second step they proof that the process stays at states with enough infected vertices sufficiently long. For both steps they define a potential function on the state space and show that the process has a positive (or negative) drift, i.e. the value of the potential function increases (or decreases) in expectation in all relevant states. Then they use tools for processes with positive drift to derive the results. We use the property that a \placy process is just a SIRS process with an extra rate to infect recovered vertices to show that the same drift occurs in our process.

  For the first part of their proof \cite[Lemma 1.3]{FrGoKlKrPa2024}, they use a potential function $\potentialIMinusR{\timeDiscrete} = \infectedDiscrete{\timeDiscrete} - \varepsilon_H\recoveredDiscrete{\timeDiscrete}$. The extra rate to infect vertices in the \placy process just increases the drift on this potential, keeping the positive drift. The rest of the arguments in the proof are not affected.

  For the second part, they use a more complex potential with a negative drift in the relevant regions. The proof already works for expanding subgraphs and therefore has to take into account that there might be an arbitrarily large rate at which vertices in the subgraph get infected by vertices outside of the subgraph. The proof accounts for that in \cite[Lemma 4.4]{FrGoKlKrPa2024}. The lemma upper bounds the drift of the defined potential by a term that is independent of that extra infection rate. In the \placy process, the same lemma statement still holds. Just like the extra infection rate, the extra rate of infecting recovered vertices decreases the drift, so any upper bound of it still holds. The rest of the proofs only use this upper bound, so they work exactly the same way as in the SIRS process.
\end{proof}

\section{Conclusion and Future Work}

We study the impact of gradually declining immunity on the SIRS process. To this end, we define the \cSIRS process, whose expected degree of immunity at all points in time is the same as in the original SIRS process. We mathematically rigorously analyze the expected survival time of the \cSIRS process on stars and expander graphs, as well as the expected survival time of the SIRS process on stars.
This allows us to compare these two processes more precisely to each other and also to the well known SIS process, which features no immunity. We prove that the survival time of the \cSIRS process becomes super-polynomial at the same thresholds as the SIS process on both of these graph classes. This stands in contrast to the SIRS process, for which we prove an almost tight polynomial expected survival time on stars for any choice of the infection rate of the process.
Since our lower bounds carry over to graphs that contain stars as subgraphs, such as scale-free networks, they show on a huge variety of networks that gradually declining immunity is far more similar to no immunity at all than to temporary full immunity.

While we show that the \cSIRS process exhibits a super-polynomial expected survival time for the same infection rates as the SIS process, our lower bounds for the \cSIRS process are strictly smaller than the known lower bounds for the SIS process.
This can hint at a potential advantage of gradually declining immunity, albeit just for the super-polynomial regime of the expected survival time.
In order to make this point formal, it would be required to prove upper bounds on the expected survival time of the \cSIRS process in this regime.
Moreover, it would be interesting to investigate whether there are graph classes on which the potential impact of declining immunity is more noticeable than on stars. While we looked at the behavior of the process on stars and cliques, it is still open what happens on graphs that contain both.

Another interesting direction for future work is to consider other functions for the declining resistance than the current choice, which declines exponentially with a rate constant in the number of vertices.
Studying such functions could provide important insights into when immunity starts to have a meaningful impact in decreasing the expected survival time.

Last, our bounds for the SIRS process assume a deimmunization rate~$\deimmunizationRate$ that is independent of the network size.
As this rate approaches zero, the SIRS process should approach the SIS process.
Hence, the effect of full immunity also vanishes.
However, it is not known yet for which exact values of~$\deimmunizationRate$ this is the case.

\printbibliography

\end{document}